\newcommand{\thickhline}{%
    \noalign {\ifnum 0=`}\fi \hrule height 1pt
    \futurelet \reserved@a \@xhline
}
\newcolumntype{"}{@{\hskip\tabcolsep\vrule width 1pt\hskip\tabcolsep}}
\begin{document}
\title{Decentralized Proximal Gradient Algorithms with Linear Convergence Rates}

\author{
Sulaiman A. Alghunaim, Ernest K. Ryu, Kun Yuan, and     Ali H.~Sayed, ~\IEEEmembership{Fellow,~IEEE}
\thanks{ S. A. Alghunaim is with the Department of Electrical Engineering, Kuwait University (e-mail: {\tt\small salghunaim@ucla.edu}). K. Yuan is with Alibaba Group (US), Bellevue, WA, USA (e-mail: {\tt\small kunyuan@ucla.edu}). This work was done while they were Ph.D. students at UCLA. E. K. Ryu is with the Department of Mathematical Sciences, Seoul National University (e-mail:
{\tt\small ernestryu@snu.ac.kr}).  A. H. Sayed is with the Ecole Polytechnique Federale de Lausanne
(EPFL), School of Engineering (e-mail:
{\tt\small ali.sayed@epfl.ch}).}}

\markboth{}%
{Shell \MakeLowercase{\textit{et al.}}: Bare Demo of IEEEtran.cls for Journals}

\maketitle
\date{}

\begin{abstract}
This work studies a class of non-smooth decentralized multi-agent optimization problems where the agents aim at minimizing a sum of local strongly-convex smooth components plus a common non-smooth term. We propose a general primal-dual algorithmic framework that unifies many existing state-of-the-art algorithms. We establish linear convergence of the proposed method to the exact solution  in the presence of the non-smooth term.
  Moreover, for the more general class of problems with agent specific non-smooth terms,  we  show that linear convergence cannot be achieved (in the worst case) for the class of algorithms that uses the gradients and the proximal mappings of the smooth and non-smooth parts, respectively. We further provide a numerical  counterexample that shows how some state-of-the-art algorithms   fail to converge linearly  for strongly-convex objectives and different local non-smooth terms. 
  \end{abstract}
\begin{IEEEkeywords}
Decentralized optimization, proximal gradient algorithms, linear convergence, gradient tracking, diffusion, unified decentralized algorithm.
\end{IEEEkeywords}

\section{Introduction}
In this work, we consider a static and undirected network of $K$ agents  connected over some  graph where each agent $k$ owns a private cost function $J_k: \real^{M} \rightarrow \real$.  Through only local interactions (i.e., with agents only communicating  with their immediate neighbors), each agent is interested in finding a solution to the following problem: 
\begin{align}
w^\star \in \argmin_{w\in \mathbb{R}^M} \quad
  \frac{1}{K}\sum_{k=1}^K J_k(w) + R(w) \label{decentralized1} 
\end{align}
 where  $R:\real^{M} \rightarrow \real \cup \{+ \infty \}$  is a convex function  (not necessarily differentiable).    We adopt the following assumption throughout this work.
\begin{assumption} \label{assump:cost}
{\rm ({\bf Cost function}):  We assume that a solution exists to problem \eqref{decentralized1} and each cost function $ J_k(w)$ is first-order differentiable and  $\nu$-strongly-convex:
\eq{
(w^o-w^\bullet)\tran \big(\grad J_k(w^o)-\grad J_k(w^\bullet)\big) &\geq \nu \|w^o-w^\bullet\|^2  \label{stron-convexity} 
} 
with $\delta$-Lipschitz continuous gradients:
\eq{
\|\grad J_k(w^o)-\grad J_k(w^\bullet)\| &\leq \delta \|w^o-w^\bullet\|  \label{lipschitz}
}
\noindent for any $w^o$ and $w^\bullet$. Constants $\nu$ and $\delta$ are strictly positive and  satisfy $\nu\leq \delta$. 
We also assume $R(w)$ to be a proper\footnote{The function $f(.)$ is proper if $-\infty <f(x)$ for all $x$ in its domain and $f(x)< \infty$ for at least one $x$.}  and lower-semicontinuous convex function.
\qd
}
\end{assumption}
Note that from the strong-convexity condition \eqref{stron-convexity}, we know the objective function in \eqref{decentralized1} is also strongly convex and, thus, the global solution $w^\star$ is unique.

\subsection{Related Works}
 Various algorithms have been proposed to solve decentralized optimization problems of the form \eqref{decentralized1} -- see  \cite{
 xu2015augmented,nedic2017geometrically,di2016next,
 scutari2019distributed,sun2016distributed, qu2017harnessing,
 nedic2017achieving,shi2015extra,ling2015dlm,yuan2019exactdiffI, li2017nids,pu2018push,xin2018linear,sun2019convergence}.  Only few works have attempted to unify some of these various algorithms  \cite{jakovetic2019unification,sundararajan2018canonical,
 sundararajan2019analysis}. For example, the work \cite{jakovetic2019unification} proposed a general method that includes EXTRA \cite{shi2015extra}  and DIGing \cite{nedic2017achieving} (for static and undirected network)  as special cases. However, the  method in \cite{jakovetic2019unification} does not include the adapt-then-combine\footnote{The Adapt-then-Combine (ATC) structure was proposed in  \cite{cattivelli2010diffusion} to distinguish between different implementations of diffusion learning strategies -- see also  \cite[Ch. 7]{sayed2014nowbook}. } (ATC) gradient-tracking algorithms \cite{xu2015augmented,nedic2017geometrically,scutari2019distributed}.  The work \cite{sundararajan2018canonical} proposed a canonical form that characterizes decentralized algorithms that require a single round of communication and gradient computation per iteration, which  does not include the Aug-DGM (ATC-DIGing) \cite{xu2015augmented,nedic2017geometrically}. Reference \cite{sundararajan2018canonical}   only focused on the canonical form without focusing on the analysis of this form.   Later, the work \cite{sundararajan2019analysis}  studied a class of the canonical form in \cite{sundararajan2018canonical} over time-varying connected networks and provided worst case linear convergence rates through numerical solution of semidefinite programs.

 Different from \cite{jakovetic2019unification,sundararajan2018canonical,
 sundararajan2019analysis} we propose a general primal-dual framework that unifies  many existing algorithms including EXTRA \cite{shi2015extra}, DLM \cite{ling2015dlm}, Exact diffusion \cite{yuan2019exactdiffI}, NIDS \cite{li2017nids}, and different implementations of the gradient tracking methods \cite{xu2015augmented,nedic2017geometrically,scutari2019distributed,
 di2016next,qu2017harnessing} including Aug-DGM \cite{xu2015augmented}. Our  framework shows that the ATC gradient-tracking methods can  be represented as primal-dual recursions.    The work \cite{alghunaim2019linearly} proposed a proximal gradient algorithm that solves \eqref{decentralized1} and established its linear convergence to the {\em exact} solution $w^\star$.  Motivated by the technique from \cite{alghunaim2019linearly}  we extend the proposed general framework to handle the non-smooth term $R(w)$ and prove linear convergence of the proposed general method  to the solution $w^\star$ in the presence of the non-smooth term.

In order to establish global linear convergence, this work  considers the non-smooth term to be common across all agents. One might wonder whether it is  possible for a  decentralized proximal gradient algorithm to achieve  global linear convergence in the presence of different local non-smooth  $R_k(w)$ terms. As far as we know, this question has not been explicitly answered in the literature.  Many decentralized optimization problems  where each agent $k$ has a local non-smooth term $R_k(w)$ possibly different from other agents \cite{li2017nids,shi2015proximal,chang2015multi,
aybat2018distributed,bianchi2015coordinate} have been proposed. None of these methods have been shown to achieve global linear convergence in the presence of general non-smooth terms.  By adjusting  the results from \cite{woodworth2016tight} to the decentralized optimization   set-up with agent-specific non-smooth terms $\{R_k(w)\}$, it can be shown that it is {\em impossible} for any proximal gradient based algorithm to achieve linear convergence  in the {\em  worst case}  -- see Section \ref{sec:sublinearbound}. Note that the works \cite{arjevani2015communication,uribe2020dual} showed that global linear convergence is not possible for   non-smooth strongly-convex functions in the worst case for the class of algorithms limited to one communication round but unlimited in the amount of computation and access to the functions  per iteration. In contrast, we consider algorithms unlimited in the number of communications rounds but limited to one gradient and proximal computations per iteration. We remark that under a common non-smooth term, the work \cite{sun2019convergence} also established global linear convergence for a decentralized algorithm that is based on successive convex approximation, which is different from our proximal primal-dual approach.

\subsection{Contribution}
  Given the above, this paper  has three contributions. 
  First, when $R(w)=0$ we propose a novel  primal-dual {\em unified decentralized algorithm} (UDA) that unifies many existing state-of-the-art algorithms including the ATC algorithms \cite{xu2015augmented,nedic2017geometrically,scutari2019distributed,
 di2016next,yuan2019exactdiffI, li2017nids} and the non-ATC algorithms \cite{qu2017harnessing,
 nedic2017achieving, shi2015extra,ling2015dlm}. To our knowledge, this is the first primal-dual interpretation of the  ATC gradient-tracking methods \cite{xu2015augmented,nedic2017geometrically,scutari2019distributed,
 di2016next}.  Second, we extend this framework to handle a {\em common} non-smooth regularization term  and provide a unifying linear convergence analysis  under proper conditions. Our step-size and convergence rate upper bounds shed light on the stability and performance of these various methods.      Third, by tailoring a result from \cite{woodworth2016tight}, we show that if each agent owns a non-smooth term, then linear convergence to the {\em exact} solution $w^\star$ cannot be achieved in the worst case for the class of decentralized algorithms  where each agent can compute one gradient and one proximal mapping per iteration for the smooth and non-smooth parts, respectively. We further provide a numerical counterexample where PG-EXTRA \cite{shi2015proximal} and proximal linearized ADMM \cite{chang2015multi,aybat2018distributed} fail to achieve global linear convergence for strongly-convex objectives.  

\subsection{Notation}
  For a vector $x \in \real^M$ and  a positive semi-definite matrix $C \geq 0$, we let $\|x\|_C^2=x\tran C x$. For any matrix $A$, we let $\sigma_{\max}(A)$ denote the maximum singular value of $A$ and $\underline{\sigma}(A)$ denote the minimum {\em non-zero} singular value of $A$. Moreover, for any symmetric matrices $A$ and $B$ with the same dimension, we let $A\ge B$ ($A>B$) if $A-B$ is positive semi-definite (positive definite). The $N \times N$ identity matrix is denoted by $I_N$. We let $\one_{N}$ be a vector of size $N$ with all entries equal to one. The Kronecker product is denoted by $\otimes$. We let ${\rm col}\{x_n\}_{n=1}^N$ denote a column vector (matrix) that stacks the vector (matrices) $x_n$ of appropriate dimensions on top of each other. The subdifferential $\partial f(x)$ of a function $f:\real^{M} \rightarrow \real$ at some $x \in \real^{M}$ is the set of all subgradients $
\partial f(x) = \{g \ | \ g\tran(y-x)\leq f(y)-f(x), \forall \ y \in \real^{M}\} $.
The proximal operator with parameter $\mu>0$ of a function $f:\real^{M} \rightarrow \real$ is
\eq{
{\rm \bf prox}_{\mu f}(x) = \argmin_z \ f(z)+{1 \over 2 \mu} \|z-x\|^2  \label{def_proximal}
}
\section{Unified Decentralized Algorithm (UDA)} \label{sec:ATC:smooth}
In this section, we present the {\em unified decentralized algorithm} (UDA) that covers various state-of-the-art algorithms as special cases. To this end, we will first focus on the smooth case ($R(w)=0$), which will then be extended to handle the non-smooth component $R(w)$ in the following section.
\subsection{General Primal-Dual Framework}
 For algorithm derivation and motivation purposes, we will rewrite problem \eqref{decentralized1} in an equivalent manner. To do that, we let  $w_k \in \real^M$ denote a local copy of $w$ available at agent $k$ and introduce the network quantities: 
 \eq{
 \sw \define {\rm col}\{w_1,\cdots,w_K\} \in \real^{KM}, \quad \cJ(\sw) &\define \frac{1}{K} \sum_{k=1}^K J_k(w_k)
 }   
 Further, we introduce two general symmetric matrices  $\cB \in \real^{MK \times MK}$ and  $\cC \in \real^{MK \times MK}$ that satisfy the following conditions:  
 \begin{subnumcases}{\label{consensus-condition-both}} 
	 \cB \sw=0 \iff w_1=\cdots=w_K \label{consensus-condition-B} \\
\cC=0 \quad {\rm or} \quad 	\cC \sw =0 \iff \cB \sw=0   \label{consensus-condition-C}	 
	\end{subnumcases}  
 For algorithm derivation, the matrices $\{\cB,\cC\}$ can be any general consensus matrices \cite{loizou2016new}. Later, we will see how to choose these matrices to recover different decentralized implementations -- see Section \ref{sec:specific_ins}.  With these quantities, it is easy to see that problem \eqref{decentralized1} with $R(w)=0$ is equivalent to the following problem:
\begin{align}
 \underset{\ssw\in \mathbb{R}^{KM}}{\text{minimize   }}& \quad
   \cJ(\sw)+\frac{1}{2 \mu}\| \sw\|_{\cC}^2 , \quad {\rm s.t.} \ \cB \sw=0\label{decentralized2} 
\end{align}
where $\mu >0$ and the matrix $\cC \in \real^{MK \times MK}$ is a positive semi-definite consensus penalty matrix satisfying \eqref{consensus-condition-C}.  To solve problem \eqref{decentralized2}, we consider  the saddle-point formulation:
\eq{
 \min_{\ssw} \max_{\ssy} \quad \cL(\sw,\sy)  \define \cJ(\sw)  + \frac{1}{ \mu} \sy\tran \cB\sw + \frac{1}{2 \mu}\| \sw\|_{\cC}^2
\label{saddle_point}
}
where $\sy \in \real^{MK}$ is the dual variable.  To solve \eqref{saddle_point}, we propose the following algorithm: let $\sy_{-1}=0$ and $\sw_{-1}$ take any arbitrary value. Repeat for $i=0,1,\cdots$

\begin{subnumcases}{\label{alg_ATC_framework}}
\ssz_i =  (I-\cC) \sw_{i-1}-\mu \grad \cJ(\sw_{i-1})  -  \cB \sy_{i-1} \label{z_ATC_DIG}  &\textbf{(primal-descent)} \\
\sy_i = \sy_{i-1}+ \cB  \ssz_i \label{dual_ATC_DIG}  &\textbf{(dual-ascent)} \\
\sw_i = \bar{\cA} \ssz_{i} \label{primal_ATC_DIG}  &\textbf{(Combine)} 
 \end{subnumcases}
 where $\bar{\cA}=\bar{A} \otimes I_M$ and  $\bar{A}$ is a symmetric and doubly-stochastic combination matrix.  In the above UDA algorithm, step \eqref{z_ATC_DIG} is a gradient descent followed by a gradient ascent step in  \eqref{dual_ATC_DIG}, both applied to the saddle-point problem \eqref{saddle_point} with step-size $\mu$.  The last step \eqref{primal_ATC_DIG} is a combination step that enforces further agreement. Next we show that by proper choices of $\bar{\cA}$, $\cB$, and $\cC$ we can recover many state of the art algorithms. To do that,  we need to introduce the  combination matrix  associated with the  network.
\subsection{Network Combination Matrix} \label{sec:combina:matrix}
   Thus, we introduce the combination matrices
  \eq{
  A=[a_{sk}] \in \real^{K \times K}, \quad  \cA= A \otimes I_M   \label{combination-cal-A}
}  
  where the entry  $a_{sk}=0$ if there is no edge connecting agents $k$ and $s$. The matrix $A$ is assumed to be  symmetric and doubly stochastic matrix (different from $\bar{A}$).  We further assume the matrix to be  primitive, i.e., there exists an integer $j>0$ such that all entries of  $A^j$ are positive.  
 Under these conditions it holds that $(I_{MK}-\cA) \sw=0$ if and only if $w_k=w_s$ for all $k,s$ --- see \cite{shi2015extra,yuan2019exactdiffI}. 
\subsection{Specific Instances} \label{sec:specific_ins}
We start by rewriting  recursion \eqref{alg_ATC_framework} in an equivalent manner by eliminating the dual variable $\sy_i$. Thus, from \eqref{z_ATC_DIG} it holds that
 \eq{
\ssz_i-\ssz_{i-1} &= (I-\cC) (\sw_{i-1}-\sw_{i-2})-  \cB (\sy_{i-1}-\sy_{i-2}) -\mu \big(\grad \cJ(\sw_{i-1})-\grad \cJ(\sw_{i-2})\big)  \nonumber \\
 &\overset{\eqref{dual_ATC_DIG}}{=}  (I-\cC) (\sw_{i-1}-\sw_{i-2})-  \cB^2   \ssz_{i-1} -\mu \big(\grad \cJ(\sw_{i-1})-\grad \cJ(\sw_{i-2})\big)   \nonumber
}
Rearranging the previous equation  we get:
 \eq{
\ssz_i &= (I-\cB^2) \ssz_{i-1} + (I-\cC) (\sw_{i-1}-\sw_{i-2}) -\mu \big(\grad \cJ(\sw_{i-1})-\grad \cJ(\sw_{i-2})\big) 
\label{eq:sub_atc}
}
  Utilizing this property, we will now choose specific matrices $\{\bar{\cA},\cB,\cC\}$ and show that we can recover many state of the art algorithms (see Table \ref{table}): 
\subsubsection{\bf Exact diffusion \cite{yuan2019exactdiffI}}
  If we choose $\bar{\cA}=0.5 (I+\cA)$, $\cC=0$ and $\cB^2=0.5 (I- \cA)$ in \eqref{eq:sub_atc}, we get: 
 \eq{
\ssz_i &= 
\bar{\cA}
 \ssz_{i-1} +  \sw_{i-1}-\sw_{i-2} -\mu \big(\grad \cJ(\sw_{i-1})-\grad \cJ(\sw_{i-2})\big) 
}
 Multiplying the previous equation by $\bar{\cA}$ and noting from \eqref{primal_ATC_DIG} that $\sw_i= \bar{\cA} \ssz_{i}$, we get:
 \eq{
\sw_i=\bar{\cA} \bigg( 2 \sw_{i-1}
 -   \sw_{i-2} -\mu \big(\grad \cJ(\sw_{i-1})-\grad \cJ(\sw_{i-2})\big)  \bigg)  \label{exact-diffusion}
}
  The above recursion is the exact diffusion recursion first proposed in \cite{yuan2019exactdiffI}. We also note that if we choose $\cC=0$, $\cB^2=c (I- \cA)$ ($c \in \real$), and $\bar{\cA}=I-\cB^2$ then we recover the smooth case of the NIDS algorithm from \cite{li2017nids}. As highlighted in \cite{li2017nids},  NIDS is identical to exact diffusion for the smooth case  when $c=0.5$.
\subsubsection{\bf Aug-DGM  \cite{xu2015augmented}}
 Let $\cC=0$, $\bar{\cA}=\cA^2$, and $\cB=I-\cA$. Substituting into \eqref{eq:sub_atc}:
 \eq{
\ssz_i &= (2\cA-\cA^2) \ssz_{i-1} +  \sw_{i-1}-\sw_{i-2} -\mu \big(\grad \cJ(\sw_{i-1})-\grad \cJ(\sw_{i-2})\big) 
}
By multiplying the previous equation by $\bar{\cA}=\cA^2$ and noting from \eqref{primal_ATC_DIG} that $\sw_i= \cA^2 \ssz_{i}$, we get the recursion:
\eq{
\sw_i=\cA \bigg( 2 \sw_{i-1}
 - \cA  \sw_{i-2} -\mu \cA \big(\grad \cJ(\sw_{i-1})-\grad \cJ(\sw_{i-2})\big)  \bigg) \label{atc_DGM_eliminate} 
}
The above recursion is equivalent to the Aug-DGM \cite{xu2015augmented} (also known as ATC-DIGing \cite{nedic2017geometrically}) algorithm:
\begin{subequations} \label{atc_DGM}
\eq{ 
\sw_i&=\cA(\sw_{i-1}-\mu  \ssx_{i-1}) \label{atc-dgm1} \\
\ssx_{i}&=\cA \big(\ssx_{i-1}+ \grad \cJ(\sw_i)- \grad \cJ(\sw_{i-1}) \big) \label{atc-dgm2}
}
\end{subequations}
By eliminating the gradient tracking variable $\ssx_{i}$, we can rewrite the previous recursion as \eqref{atc_DGM_eliminate} -- see Appendix \ref{supp_equiva_representation}.
\subsubsection{\bf ATC tracking method \cite{di2016next,scutari2019distributed}}
Let $\cC=I-\cA$ and $\cB=I-\cA$. Substituting into \eqref{eq:sub_atc}:
 \eq{
\ssz_i &=(2\cA  -\cA^2) \ssz_{i-1}  
+ \cA \sw_{i-1} - \cA  \sw_{i-2} -\mu \big(\grad \cJ(\sw_{i-1})-\grad \cJ(\sw_{i-2})\big) 
}
By multiplying the previous equation by $\bar{\cA}=\cA$ and noting from \eqref{primal_ATC_DIG} that $\sw_i= \cA \ssz_{i}$, we get the recursion:
\eq{
\sw_i=\cA \bigg( 2 \sw_{i-1}
 - \cA  \sw_{i-2} -\mu \big(\grad \cJ(\sw_{i-1})-\grad \cJ(\sw_{i-2})\big)  \bigg)  \label{next_eliminate}
}
The above recursion is equivalent to the following variant of the ATC tracking method  \cite{di2016next,scutari2019distributed}:
\begin{subequations} \label{next}
\eq{
\sw_i&=\cA(\sw_{i-1}-\mu  \ssx_{i-1}) \label{next1} \\
\ssx_{i}&=\cA \ssx_{i-1}+ \grad \cJ(\sw_i)- \grad \cJ(\sw_{i-1}) \label{next2}
}
\end{subequations}
By eliminating the gradient tracking variable $\ssx_i$,  we can show that the previous recursion is exactly \eqref{next_eliminate} -- see Appendix \ref{supp_equiva_representation}. 
\subsubsection{\bf NON-ATC Algorithms ($\bar{\cA}=I$)}
We note that DIGing \cite{qu2017harnessing,nedic2017achieving}, EXTRA \cite{shi2015extra}, and the decentralized linearized alternating direction method of multipliers (DLM) \cite{ling2015dlm} can also be represented by \eqref{alg_ATC_framework} with $\bar{\cA}=I$ and proper choices of $\cB^2$ and $\cC$ -- see Table \ref{table}.   Since  $\bar{\cA}=I$, these algorithms are not of the ATC form.  Please see Appendix \ref{supp_non_atc}  for the details and analysis of non-ATC case.
\begin{table}[t] 
\caption{Listing of some state-of-the-art first-order algorithms that can recovered by specific choices of $\bar{\cA}$, $\cB$, and $\cC$ in \eqref{alg_ATC_framework}. The matrix $\cA$ is a typical symmetric and doubly stochastic network combination matrix introduced in \eqref{combination-cal-A}. The matrix $\cL$ is chosen such that the $k$-th block of $\cL \sw_i$ is equal to $\sum_{s \in \cN_k} w_{k,i}-w_{s,i}$ and $c>0$ is a step-size parameter.  }
\centering
\large 
\begin{tabular}{|c|c|c|c|}
\thickhline
\rowcolor[HTML]{C0C0C0} 
{\bf ATC algorithms}      & $\bar{\cA}$      & $\cB^2$     & $\cC$     \\ \thickhline
\cellcolor[HTML]{EFEFEF} Aug-DGM/ATC-DIGing  \cite{xu2015augmented,nedic2017geometrically}                &  $\cA^2$      &      $(I-\cA)^2$ &    $0$   \\ \hline
 \cellcolor[HTML]{EFEFEF}   ATC tracking \cite{di2016next,scutari2019distributed}            &     $\cA$   &   $(I-\cA)^2$    &   $I-\cA$    \\ \hline
\cellcolor[HTML]{EFEFEF}     Exact diffusion \cite{yuan2019exactdiffI}           &       $0.5(I+\cA)$ &    $0.5(I-\cA)$   & 0      \\ \hline
 \cellcolor[HTML]{EFEFEF}     NIDS \cite{li2017nids}           &       $I-c(I-\cA)$ &    $c(I-\cA)$   & 0      \\ \thickhline
   \rowcolor[HTML]{C0C0C0} 
{\bf NON-ATC algorithms}      & $\bar{\cA}$      & $\cB^2$     & $\cC$     \\ \thickhline
 \cellcolor[HTML]{EFEFEF}  DIGing \cite{qu2017harnessing,nedic2017achieving}             &        $I$ &    $(I-\cA)^2$   & $I-\cA^2$      \\ \hline 
\cellcolor[HTML]{EFEFEF} EXTRA \cite{shi2015extra}                &    $I$    &    $0.5(I-\cA)$   &   $0.5(I-\cA)$  \\ \hline 
\cellcolor[HTML]{EFEFEF} DLM \cite{ling2015dlm}                &    $I$    &    $c \mu \cL$   &   $c \mu \cL$    \\ \thickhline
\end{tabular}
 \label{table}
\end{table}
\begin{remark} [\sc Communication cost] \label{remak:sharing-variable}{\rm
Note that exact diffusion \eqref{exact-diffusion} requires one round of communication or combination per iteration. This means that each agent sends an $M$ vector to its neighbor per iteration. On the other hand, the gradient tracking method \eqref{next} requires two rounds of combination/communication per iteration for the vectors $\sw_{i-1}-\mu  \ssx_{i-1}$ and $\ssx_{i-1}$, which means each agent sends a $2M$ vector to its neighbor. Similarly, the Aug-DGM (ATC-DIGing) method \eqref{atc_DGM} also requires two rounds of combination per iteration for the vectors  $\sw_{i-1}-\mu  \ssx_{i-1}$ and $\ssx_{i-1}+ \grad \cJ(\sw_i)- \grad \cJ(\sw_{i-1})$; moreover, it requires communicating these two variables sequentially (at different communication steps). \qd
}
\end{remark}

	\section{Proximal Unified Decentralized Algorithm (PUDA)}
   In this section, we extend UDA \eqref{alg_ATC_framework} to handle the non-differentiable component $R(w)$ to get a proximal unified decentralized algorithm (PUDA). Let us introduce the network quantity
\eq{
  \cR(\sw) &\define  {1 \over K} \sum_{k=1}^K R(w_k)  
} 
With this definition, we propose the following recursion:  let $\sy_{-1}=0$ and $\sw_{-1}$ take any arbitrary value. Repeat for $i=0,1,\ldots$
\begin{subnumcases}{ \label{alg_prox_ATC_framework}} 
\ssz_i =  (I-\cC) \sw_{i-1}-\mu \grad \cJ(\sw_{i-1})  -  \cB \sy_{i-1} \label{z_prox_ATC_DIG}   \\
\sy_i = \sy_{i-1}+ \cB  \ssz_i \label{dual_prox_ATC_DIG}   \\
\sw_i = {\rm \bf prox}_{\mu \cR}\big(\bar{\cA} \ssz_{i} \big) \label{primal_prox_ATC_DIG}  
 \end{subnumcases}
 We refer the reader to Appendix \ref{supp_equiva_represent_prox} for specific instances of PUDA \eqref{alg_prox_ATC_framework} and how to implement them in a decentralized manner.  In the following, we will show that $\sw_i$ in the above recursion converges to $\one_K \otimes w^\star$ where $w^\star$ is the desired solution of \eqref{decentralized1}.  We first prove the existence and optimality of the fixed points of recursion \eqref{alg_prox_ATC_framework}.
\begin{lemma}[\sc Optimality Point] \label{lemma:existence_fixed_optimality}{\rm Under Assumption \ref{assump:cost} and condition \eqref{consensus-condition-both}, a fixed point $(\sw^\star, \sy^\star, \ssz^\star)$ exists for recursions \eqref{z_prox_ATC_DIG}--\eqref{primal_prox_ATC_DIG}, i.e., it holds that
	\begin{subnumcases}{}
	\hspace{.5mm} \ssz^\star =\sw^\star-\mu \grad \cJ(\sw^\star)- \cB \sy^\star \label{p-d_ed-star} \\
	\hspace{2.8mm} 0 =  \cB \ssz^\star \label{d-a_ed-star} \\
	\sw^\star = {\rm \bf prox}_{\mu \cR}(\bar{\cA} \ssz^\star) \label{prox_step_ed-star}
	\end{subnumcases}
Moreover, $\sw^\star$ and $\ssz^\star$ are unique with $\sw^\star=\one_K \otimes w^\star$ where $w^\star$ is the solution of problem \eqref{decentralized1}.
	}
\end{lemma}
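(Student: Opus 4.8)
The plan is to prove the statement in two directions. First I would establish that \emph{any} fixed point $(\sw^\star,\sy^\star,\ssz^\star)$ forces $\sw^\star$ to be consensus with its common block solving \eqref{decentralized1}; this yields optimality and, via strong convexity, uniqueness of $\sw^\star$ and $\ssz^\star$. Then I would construct one explicit fixed point from a minimizer $w^\star$ of \eqref{decentralized1}, which yields existence. The whole argument rests on the consensus structure: by \eqref{consensus-condition-B} the null space of $\cB$ is exactly the consensus subspace $\{\one_K\otimes v:v\in\real^M\}$, and $\bar{\cA}$ is doubly stochastic so it fixes every consensus vector, $\bar{\cA}(\one_K\otimes v)=\one_K\otimes v$.

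For the optimality and uniqueness direction I would start from \eqref{d-a_ed-star}, which gives $\cB\ssz^\star=0$ and hence $\ssz^\star=\one_K\otimes z^\star$ is consensus. Then $\bar{\cA}\ssz^\star=\ssz^\star$, so the prox step \eqref{prox_step_ed-star} reads $\sw^\star={\rm \bf prox}_{\mu\cR}(\ssz^\star)$; since $\cR$ is separable and identical across agents, this prox acts blockwise and maps a consensus input to a consensus output, so $\sw^\star=\one_K\otimes w^\star$ for a common $w^\star$, with $z^\star-w^\star\in(\mu/K)\partial R(w^\star)$. Next I would project the primal relation \eqref{p-d_ed-star} onto the consensus direction by left-multiplying with $\one_K\tran\otimes I_M$. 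Because $\cB$ is symmetric and annihilates the consensus subspace, $(\one_K\tran\otimes I_M)\cB=0$, which eliminates $\sy^\star$ and leaves an identity in $\real^M$ relating $z^\star-w^\star$ to $-\frac{\mu}{K^2}\sum_k\grad J_k(w^\star)$. Combining this with the prox subgradient inclusion gives exactly $0\in\frac{1}{K}\sum_k\grad J_k(w^\star)+\partial R(w^\star)$, the optimality condition for \eqref{decentralized1}. Strong convexity of the objective then makes $w^\star$ unique, hence $\sw^\star=\one_K\otimes w^\star$ is unique, and since $\ssz^\star$ is consensus and determined by its average it is unique as well.

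For existence I would run this backwards: take a minimizer $w^\star$ of \eqref{decentralized1}, set $\sw^\star=\one_K\otimes w^\star$ and $\ssz^\star=\one_K\otimes z^\star$ with $z^\star:=w^\star-\frac{\mu}{K^2}\sum_k\grad J_k(w^\star)$. Conditions \eqref{d-a_ed-star} and \eqref{prox_step_ed-star} then follow directly: $\ssz^\star$ is consensus so $\cB\ssz^\star=0$ and $\bar{\cA}\ssz^\star=\ssz^\star$, while the optimality inclusion for $w^\star$ is precisely what the blockwise prox identity requires. The remaining task is to produce $\sy^\star$ satisfying \eqref{p-d_ed-star}, i.e.\ to solve $\cB\sy^\star=\sw^\star-\mu\grad\cJ(\sw^\star)-\ssz^\star$. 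I would note that $\cC\sw^\star=0$ by \eqref{consensus-condition-C}, so the $(I-\cC)$ factor in \eqref{z_prox_ATC_DIG} indeed disappears, and then verify solvability by checking that the right-hand side lies in the range of $\cB$.

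I expect the main obstacle to be exactly this last solvability step. Since $\cB$ is symmetric, its range equals the orthogonal complement of its null space, namely the orthogonal complement of the consensus subspace; so it suffices to show the right-hand side is orthogonal to every $\one_K\otimes v$, equivalently that $\one_K\tran\otimes I_M$ applied to it vanishes. Carrying out that projection and substituting the definition of $z^\star$ makes the gradient terms cancel and yields zero, confirming that a (generally non-unique) $\sy^\star$ exists. The only points requiring care are the bookkeeping with the $1/K$ factors coming from $\cJ=\frac{1}{K}\sum_k J_k$ and $\cR=\frac{1}{K}\sum_k R$, and invoking strong convexity only at the very end to conclude uniqueness of $w^\star$; everything else is a direct consequence of the consensus conditions \eqref{consensus-condition-both} and the double stochasticity of $\bar{\cA}$.
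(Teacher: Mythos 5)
Your proposal is correct and follows essentially the same route as the paper's proof: both directions hinge on exactly the same ingredients, namely that $\cB\ssz^\star=0$ forces $\ssz^\star=\one_K\otimes z^\star$ with $\bar{\cA}\ssz^\star=\ssz^\star$, that the blockwise prox identity yields $z^\star-w^\star\in(\mu/K)\partial R(w^\star)$, that left-multiplication by $\one_K\tran\otimes I_M$ eliminates $\cB\sy^\star$ and ties $z^\star$ to $w^\star-\tfrac{\mu}{K^2}\sum_k\grad J_k(w^\star)$ (the paper's constructed $z^\star=(\mu/K)r^\star+w^\star$ is the same point), and that $\sy^\star$ exists because the residual is orthogonal to the consensus subspace, hence in $\mbox{Range}(\cB)$ by symmetry of $\cB$. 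Reversing the order (optimality/uniqueness first, existence second) changes nothing of substance.
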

\begin{proof}   See Appendix \ref{supp_lemma_fixed}. 
\end{proof} 
 \section{Linear Convergence}
Note that  there exists a particular fixed point $(\sw^\star, \sy_b^\star, \sz^\star)$ where $\sy_b^\star$ is a unique vector that belongs to the range space of $\cB$ -- see \cite[Remark 2]{alghunaim2019linearly}. In the following we will show that the iterates $(\sw_i, \sy_i, \sz_i)$ converge linearly to this particular fixed point $(\sw^\star, \sy_b^\star, \sz^\star)$.  To this end, we introduce the error quantities:
\begin{align}
	\tsw_i\define \sw_i-\sw^\star, \quad \tsy_i \define \sy_i - \sy^\star_b, \quad \tsz_i \define \ssz_i-\ssz^\star
\end{align}
Note that from condition \eqref{consensus-condition-both} we have $\cC\sw^\star=0$. Therefore, from \eqref{z_prox_ATC_DIG}--\eqref{primal_prox_ATC_DIG} and \eqref{p-d_ed-star}--\eqref{prox_step_ed-star} we can reach the following error recursions:
\begin{subnumcases}{}
\tsz_i=(I-\cC)\tsw_{i-1}-\mu \big(\grad \cJ(\sw_{i-1})-\grad \cJ(\sw^\star) \big) - \cB \tsy_{i-1} \label{error_primal_ed} \\
\tsy_i = \tsy_{i-1}+ \cB \tsz_i \label{error_dual_ed} \\
\tsw_i = {\rm \bf prox}_{\mu \cR}\big(\bar{\cA}  \ssz_i\big)-{\rm \bf prox}_{\mu \cR}(\bar{\cA}  \ssz^\star) \label{error_prox_ed}
\end{subnumcases}
For our convergence result, we need the following technical conditions.
\begin{assumption}[\sc Consensus matrices] \label{assump_combination}
{\rm It is assumed that both condition \eqref{consensus-condition-both} and the following condition hold:
\eq{
\bar{\cA}^2 \leq I-\cB^2 \ {\rm and} \  0 \leq \cC < 2I \label{eq:asump_penalty} }
\qd
}
\end{assumption} 
\begin{remark}[\sc Convergence conditions]{\rm
\label{remark:conv_conditions}
Note that the above conditions are satisfied for  exact diffusion \cite{yuan2019exactdiffII} and NIDS \cite{li2017nids}. 
For the ATC tracking methods \eqref{atc_DGM} and \eqref{next}, the conditions translate to the requirement that the eigenvalues of $A$ are between  $[0,1]$, rather than the typical $(-1,1]$.
   Although this condition is not necessary, it can be easily satisfied by redefining $A \leftarrow 0.5 (I+A)$. We also impose it to unify the analysis of these methods through a short proof.   Note that most works that analyze decentralized methods under more relaxed conditions on the network topology  impose restrictive step-size conditions that depend on the network and on the order of $O(\nu^{\theta_1}/ \delta^{\theta_2})$ where $0 < \theta_1 \leq 1$ and $\theta_2>1$  -- see \cite{nedic2017geometrically,qu2017harnessing,pu2018push,
jakovetic2019unification}.   On the other hand, we require step sizes of  order  $O(1 / \delta)$. Moreover,  we will show that any algorithm that fits into our setup with $\cC=0$ can use a step-size as large as the centralized proximal gradient descent -- see discussion after Theorem \ref{theorem_lin_convergence}.    \qd
}
\end{remark}
Note that $\cB^2$ and $\cC$ are symmetric; thus, their singular values are equal to their eigenvalues. Moreover, since the square of a symmetric matrix is positive semi-definite, Assumption \ref{assump_combination} implies $0 < \underline{\sigma}(\cB^2) \leq 1$ and $\sigma_{\max}(\cC)<2$.

\begin{theorem}[\sc Linear Convergence]\label{theorem_lin_convergence}
{\rm	Under Assumptions \ref{assump:cost}--\ref{assump_combination}, if $\sy_0=0$ and the step-size satisfies  \eq{
\mu < {2-\sigma_{\max}(\cC) \over \delta},
}
 it holds that
\eq{
	\|\tsw_i\|^2+ \|\tsy_i\|^2 
	&\leq \gamma \big(\|\tsw_{i-1}\|^2+ \|\tsy_{i-1}\|^2 \big)
}
where   $\gamma= \max \big\{ 1 \hspace{-0.5mm}-\hspace{-0.5mm} \mu \nu (2-\sigma_{\max}(\cC)-\mu \delta ) ,1 - \underline{\sigma}(\cB^2) \big\}<1$.}
\end{theorem}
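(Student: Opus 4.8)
The plan is to establish a one-step contraction of the energy $V_i \define \|\tsw_i\|^2 + \|\tsy_i\|^2$ directly from the error recursions \eqref{error_primal_ed}--\eqref{error_prox_ed}. Three ingredients drive the argument: nonexpansiveness of the proximal map combined with the spectral condition $\bar{\cA}^2 \le I-\cB^2$ of Assumption \ref{assump_combination}; strong convexity \eqref{stron-convexity} and co-coercivity (a consequence of the $\delta$-Lipschitz gradients \eqref{lipschitz} together with convexity) inherited by $\cJ$; and the observation that, since $\sy_0=0$, each dual error $\tsy_{i-1}$ lies in the range of $\cB$ (the same subspace as the particular fixed point $\sy_b^\star$).

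First I would bound the primal update. As $\tsw_i$ is the difference of the proximal maps of $\bar{\cA}\ssz_i$ and $\bar{\cA}\ssz^\star$, nonexpansiveness gives $\|\tsw_i\|^2 \le \|\bar{\cA}\tsz_i\|^2 = \tsz_i\tran\bar{\cA}^2\tsz_i$, and $\bar{\cA}^2 \le I-\cB^2$ then yields $\|\tsw_i\|^2 \le \|\tsz_i\|^2 - \|\cB\tsz_i\|^2$. Expanding $\|\tsy_i\|^2 = \|\tsy_{i-1}\|^2 + 2\tsy_{i-1}\tran\cB\tsz_i + \|\cB\tsz_i\|^2$ from \eqref{error_dual_ed}, the $\|\cB\tsz_i\|^2$ terms cancel, leaving $V_i \le \|\tsz_i\|^2 + \|\tsy_{i-1}\|^2 + 2(\cB\tsy_{i-1})\tran\tsz_i$. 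I would then insert $\cB\tsy_{i-1} = (I-\cC)\tsw_{i-1} - \mu(\grad\cJ(\sw_{i-1})-\grad\cJ(\sw^\star)) - \tsz_i$ from \eqref{error_primal_ed} and complete the square in $\tsz_i$; since the residual $\tsz_i$ minus its non-$\tsz_i$ part is exactly $-\cB\tsy_{i-1}$, the variable $\tsz_i$ disappears and I obtain
\[
V_i \le \big(\|\tsy_{i-1}\|^2 - \|\cB\tsy_{i-1}\|^2\big) + \big\|(I-\cC)\tsw_{i-1} - \mu(\grad\cJ(\sw_{i-1})-\grad\cJ(\sw^\star))\big\|^2 .
\]

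It then remains to bound the two brackets. For the dual bracket, the range-space property gives $\|\cB\tsy_{i-1}\|^2 = \tsy_{i-1}\tran\cB^2\tsy_{i-1} \ge \underline{\sigma}(\cB^2)\|\tsy_{i-1}\|^2$, so it is at most $(1-\underline{\sigma}(\cB^2))\|\tsy_{i-1}\|^2$. For the primal bracket, with $w\define\tsw_{i-1}$ and $g\define\grad\cJ(\sw_{i-1})-\grad\cJ(\sw^\star)$, I would expand the square, use $0\le\cC<2I$ via the operator inequality $(I-\cC)^2 \le I-(2-\sigma_{\max}(\cC))\cC$, and complete the square in $\cC^{1/2}w$ to absorb the indefinite cross term $2\mu\,g\tran\cC w$. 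Co-coercivity $\|g\|^2 \le \delta\,g\tran w$ then merges the $-2\mu\,g\tran w$ and $\mu^2\|g\|^2$ contributions, and strong convexity $g\tran w \ge \nu\|w\|^2$ --- legitimate because the step-size ceiling $\mu<(2-\sigma_{\max}(\cC))/\delta$ keeps the multiplying coefficient positive --- produces a factor no larger than $\gamma_1 \define 1-\mu\nu(2-\sigma_{\max}(\cC)-\mu\delta)$. Combining the two brackets gives $V_i \le \gamma V_{i-1}$ with $\gamma=\max\{\gamma_1,\,1-\underline{\sigma}(\cB^2)\}$, and $\gamma<1$ follows because the step-size condition forces $\gamma_1<1$ while $\underline{\sigma}(\cB^2)>0$.

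I expect the main obstacle to be the primal bracket, and specifically the coupling between the penalty matrix $\cC$ and the gradient difference $g$: the cross term $2\mu\,g\tran\cC w$ carries no definite sign, and it is the routing through $\cC^{1/2}$ together with $(I-\cC)^2\le I-(2-\sigma_{\max}(\cC))\cC$ that confines the dependence to $\sigma_{\max}(\cC)$ alone. When $\cC=0$ this difficulty evaporates and the primal factor collapses to the centralized proximal-gradient rate $1-\mu\nu(2-\mu\delta)$, consistent with the discussion following the theorem. A smaller but essential bookkeeping point is verifying that strong convexity is applied with the correct sign, which is precisely where the step-size ceiling enters.
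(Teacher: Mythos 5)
Your proposal is correct, and its skeleton coincides with the paper's proof: the same energy $\|\tsw_i\|^2+\|\tsy_i\|^2$, the same cancellation of cross terms between the squared primal and dual error recursions (your completion of the square in $\tsz_i$ reproduces, in a different order, the paper's identity $\|\tsz_i\|_{\cQ}^2+\|\tsy_i\|^2=\|(I-\cC)\tsw_{i-1}-\mu(\grad\cJ(\sw_{i-1})-\grad\cJ(\sw^\star))\|^2+\|\tsy_{i-1}\|^2-\|\cB\tsy_{i-1}\|^2$ with $\cQ=I-\cB^2$, combined with the nonexpansiveness bound $\|\tsw_i\|^2\le\|\tsz_i\|_{\cQ}^2$), and the same range-space lower bound $\|\cB\tsy_{i-1}\|^2\ge\underline{\sigma}(\cB^2)\|\tsy_{i-1}\|^2$ for the dual bracket. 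Where you genuinely depart from the paper is the primal bracket. The paper folds $\cC$ into the objective: it rewrites the bracket as $\|\tsw_{i-1}-\mu(\grad\cJ(\sw_{i-1})-\grad\cJ(\sw^\star)+\tfrac{1}{\mu}\cC\tsw_{i-1})\|^2$ and applies co-coercivity and strong convexity of the augmented function $\cJ(\sw)+\tfrac{1}{2\mu}\|\sw\|_{\cC}^2$, which is $(\delta+\sigma_{\max}(\cC)/\mu)$-smooth and $\nu$-strongly convex, obtaining $\gamma_1=1-\mu\nu(2-\sigma_{\max}(\cC)-\mu\delta)$ in one stroke. You instead expand the square and tame the coupling through the operator inequality $(I-\cC)^2\le I-(2-\sigma_{\max}(\cC))\cC$ (valid since $\cC^2\le\sigma_{\max}(\cC)\cC$ for symmetric $\cC\ge0$) together with Young's inequality on the cross term $2\mu\,g\tran\cC\tsw_{i-1}$ with parameter $2-\sigma_{\max}(\cC)$ --- this is exactly where $\cC<2I$ enters --- followed by co-coercivity of $\grad\cJ$ alone. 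Carried to completion, your route yields the contraction factor
\[
\gamma_1'=1-\mu\nu\Bigl(2-\tfrac{2\mu\delta}{2-\sigma_{\max}(\cC)}\Bigr),
\]
and a short computation gives $\gamma_1-\gamma_1'=\mu\nu\,\sigma_{\max}(\cC)\,\tfrac{2-\sigma_{\max}(\cC)-\mu\delta}{2-\sigma_{\max}(\cC)}\ge0$ under the step-size condition $\mu<(2-\sigma_{\max}(\cC))/\delta$, with equality when $\cC=0$. So your bound is (marginally) sharper and implies the stated theorem with the paper's $\gamma$. What the paper's augmented-function device buys is brevity and the stated form of $\gamma$ directly; what your more elementary route buys is a slightly tighter rate and no appeal to the smoothness/strong-convexity of an auxiliary objective --- at the cost of the extra Young step whose parameter must be chosen as you indicate for the $\cC$-dependence to collapse to $\sigma_{\max}(\cC)$ alone.
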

\begin{proof}  Squaring both sides of \eqref{error_primal_ed} and \eqref{error_dual_ed} we get
\eq{
	\|\tsz_i\|^2&= \|(I-\cC)\tsw_{i-1}-\mu \big(\grad \cJ(\sw_{i-1})-\grad \cJ(\sw^\star) \big)\|^2 +  \| \cB  \tsy_{i-1}\|^2 \nonumber \\
	& \ -2   \tsy_{i-1}\tran \cB \left((I-\cC)\tsw_{i-1}-\mu \big(\grad \cJ(\sw_{i-1})-\grad \cJ(\sw^\star) \big)\right) 
	\label{er_sq_primal_ed}
}
and
\eq{
	\|\tsy_i\|^2  =\|\tsy_{i-1}+ \cB \tsz_i \|^2 &= \|\tsy_{i-1}\|^2+ \| \cB \tsz_i \|^2 + 2  \tsy_{i-1} \tran \cB \tsz_i \nonumber \\
	&\overset{\eqref{error_primal_ed}}{=} \|\tsy_{i-1}\|^2+ \| \tsz_i \|^2_{\cB^2} - 2    \|\cB \tsy_{i-1}\|^2 \nonumber \\ 
	& \quad +2   \tsy_{i-1}\tran \cB  \left((I-\cC)\tsw_{i-1}-\mu \big(\grad \cJ(\sw_{i-1})-\grad \cJ(\sw^\star) \big)\right) \label{er_sq_dual_ed}
}
Adding  equation \eqref{er_sq_dual_ed}  to \eqref{er_sq_primal_ed} and rearranging, we get 
\eq{
\|\tsz_i\|^2_{\cQ} \hspace{-0.6mm}+\hspace{-0.6mm} \|\tsy_i\|^2 \hspace{-0.6mm}=\hspace{-0.6mm} \|(I-\cC)\tsw_{i-1} \hspace{-0.6mm}- \hspace{-0.6mm} \mu \big(\grad \cJ(\sw_{i-1})\hspace{-0.6mm}-\hspace{-0.6mm}\grad \cJ(\sw^\star) \big)\|^2 \hspace{-0.6mm}+\hspace{-0.6mm} \|\tsy_{i-1}\|^2 \hspace{-0.6mm}-\hspace{-0.6mm}      \|\cB \tsy_{i-1}\|^2 \label{err_sum_ed}
}
where $\cQ = I -   \cB^2$ is positive semi-definite from \eqref{eq:asump_penalty}. Since $\sy_0 = 0$ and $\sy_i = \sy_{i-1} +  \cB \ssz_i$, we know $\sy_i\in \mbox{range}(\cB)$ for any $i$. Thus,  both $\sy_i$ and $\sy_b^\star$ lie in the range space of $\cB$, and it  holds that $
\|\cB \tsy_{i-1}\|^2 \geq 
\underline{\sigma}(\cB^2) \|\tsy_{i-1}\|^2 $. Therefore, we can bound \eqref{err_sum_ed} by
\eq{
	\|\tsz_i\|^2_{\cQ}+ \|\tsy_i\|^2 
	& \le\|\tsw_{i-1}-\mu \big(\grad \cJ(\sw_{i-1})-\grad \cJ(\sw^\star)+{1 \over \mu}\cC \tsw_{i-1}  \big)\|^2 \hspace{-0.5mm}+\hspace{-0.5mm} (1- \underline{\sigma}(\cB^2))\|\tsy_{i-1}\|^2 \label{err_sum1_ed}
}
Also, since $\cJ(\sw)+{1 \over 2 \mu}\|\sw\|^2_{\cC}$ is  $\delta_{\mu}=\delta+{1 \over \mu} \sigma_{\max}(\cC)$-smooth, it holds that \cite[Theorem 2.1.5]{nesterov2013introductory}:
\eq{
\| \big(\grad \cJ(\sw_{i-1})-\grad \cJ(\sw^\star)+{1 \over \mu}\cC \tsw_{i-1}  \big)\|^2 \leq  \delta_{\mu} \tsw_{i-1}\tran  \big(\grad \cJ(\sw_{i-1})-\grad \cJ(\sw^\star)+{1 \over \mu}\cC \tsw_{i-1}  \big)
}
Using this bound, it can be easily verified that:
\eq{
&\|\tsw_{i-1}-\mu \big(\grad \cJ(\sw_{i-1})-\grad \cJ(\sw^\star)+{1 \over \mu}\cC \tsw_{i-1}  \big)\|^2 \nnb
&\leq \|\tsw_{i-1}\|^2 - \mu (2-\mu \delta_{\mu} ) \tsw_{i-1}\tran  \big(\grad \cJ(\sw_{i-1})-\grad \cJ(\sw^\star)+{1 \over \mu}\cC \tsw_{i-1}  \big) \nnb
&\leq \big(1-  \mu \nu (2- \mu\delta_{\mu} )\big) \|\tsw_{i-1}\|^2=\big(1-  \mu \nu (2-\sigma_{\max}(\cC)-\mu \delta )\big) \|\tsw_{i-1}\|^2
}  
where in the last step  we used  the fact that $2-\mu\delta_\mu > 0$, which follows from the condition $\mu<(2-\sigma_{\max}(\cC))/\delta$, and the fact that  $\cJ(\sw)+{1 \over 2 \mu}\|\sw\|^2_{\cC}$ is $\nu$-strongly convex.   Thus, we can substitute the previous   inequality  in \eqref{err_sum1_ed} and get
\eq{
	\|\tsz_i\|^2_{\cQ} \hspace{-0.5mm}+\hspace{-0.5mm} \|\tsy_i\|^2 
	& \le \big(\hspace{-0.5mm} 1 \hspace{-0.5mm}-\hspace{-0.5mm} \mu \nu (2-\sigma_{\max}(\cC)-\mu \delta ) \hspace{-0.5mm}\big)\hspace{-0.5mm}\|\tsw_{i-1}\|^2 \hspace{-0.5mm}+ (1- \underline{\sigma}(\cB^2))\|\tsy_{i-1}\|^2 \label{err_sum1_ed-2_ed}
} 
From \eqref{error_prox_ed} and the nonexpansive property of the proximal operator, we have
\eq{
	\|\tsw_i\|^2 &= \|{\rm \bf prox}_{\mu \cR}\big(\bar{\cA} \ssz_i \big)-{\rm \bf prox}_{\mu \cR}(\bar{\cA} \ssz^\star) \|^2 \leq \|\bar{\cA} \tsz_i\|^2 \leq \| \tsz_i\|^2_{\cQ} \label{prox_bound_last}
}
where the last step holds because of condition \eqref{eq:asump_penalty} so that $\|\bar{\cA} \tsz_i\|^2=\| \tsz_i\|^2_{\bar{\cA}^2} \leq \| \tsz_i\|^2_{\cQ}$. Substituting \eqref{prox_bound_last} into \eqref{err_sum1_ed-2_ed}  we reach our result.  Finally we note that:
\eq{
\big(\hspace{-0.5mm} 1 \hspace{-0.5mm}-\hspace{-0.5mm} \mu \nu (2-\sigma_{\max}(\cC)-\mu \delta ) \hspace{-0.5mm}\big) < 1 \iff \mu < {2-\sigma_{\max}(\cC) \over \delta}
}
\end{proof}
\noindent  An interesting choice of $\bar{\cA}$, $\cB$, and $\cC$ is the class  with $\cC=0$. For $\cC=0$, which is the case for  exact diffusion \eqref{exact-diffusion} and Aug-DGM (ATC-DIGing) \eqref{atc_DGM},  the step size bound in Theorem \ref{theorem_lin_convergence} becomes $\mu<{2 \over \delta}$, which is independent of the network and as large as the centralized proximal gradient descent.   Moreover, for  $\cC=0$   the convergence rate becomes $\gamma= \max \{ 1 \hspace{-0.5mm}-\hspace{-0.5mm} \mu \nu (2-\mu \delta ) ,1 - \underline{\sigma}(\cB^2)\}<1$, which separates the network effect  from the cost function. If we further choose $\bar{\cA}=\cA^j$ and and $\cB^2=I-\cA^j$ for integer $j\geq 1$, then we have $1 - \underline{\sigma}(\cB^2)=\lambda_2(\cA^j) \rightarrow 0$ as $j \rightarrow \infty$ where $\lambda_2(\cA^j)$ is the second largest eigenvalue of $\cA^j$ . Thus, the convergence rate $\gamma=   1 \hspace{-0.5mm}-\hspace{-0.5mm} \mu \nu (2-\mu \delta )$ can match the rate of centralized algorithms for large $j$.  A similar conclusion appears for NIDS \cite{li2017nids} but for the smooth case, which is subsumed in our framework. 
\begin{remark}[\sc Network Effect]{\rm
\label{remark:numberofag}
 The convergence rate depends on the network graph through the terms $\underline{\sigma}(\cB^2)$ and $\sigma_{\max}(\cC)$. Given a certain graph, it will depend on the number of agents {\em indirectly} as we now explain.  If we choose $\cB^2=I-\cA$ and $\cC=0$ where $\cA$ is constructed as in Section \ref{sec:combina:matrix} and satisfy Assumption \ref{assump_combination}. Then, we have that $1 - \underline{\sigma}(\cB^2)=\lambda_2(\cA)$  where $\lambda_2(\cA)$ denotes the second largest eigenvalue of $\cA$.    For a cyclic network it holds that $\lambda_2(\cA)=1-\cO(1/K^2)$. For a grid network we have $\lambda_2(\cA)=1-\cO(1/K)$. For a fully connected network, we can choose $\cA=  {1 \over K} \one \one\tran$ so that $\lambda_2(\cA)=0$.  In this case, we can also choose $\bar{\cA}=  {1 \over K} \one \one\tran$ and the primal updates in \eqref{alg_prox_ATC_framework} becomes so that each agent updates its vector via a proximal gradient descent update on the objective function given in problem \eqref{decentralized1}.      \qd
}
\end{remark}
\section{Simulations on real data} \label{sec-simulation}  
In this section we test the performance of three different instances of the proposed method \eqref{alg_prox_ATC_framework} against some state-of-the-art algorithms. We consider the following sparse logistic regression problem:
\eq{ 
	\min_{w\in \real^M} \frac{1}{K}\sum_{k=1}^K J_k(w) + \rho \|w\|_1	\quad \mbox{where}\quad J_k(w) = \frac{1}{L}\sum_{\ell=1}^{L}\ln(1+\exp(-y_{k,\ell} x_{k,\ell}\tran w)) + \frac{\lambda}{2}\|w\|^2 \nonumber
}
where $\{x_{k,\ell}, y_{k,\ell}\}_{\ell=1}^L$ are local data kept by agent $k$ and $L$ is the size of the local dataset. We consider three real datasets: Covtype.binary, MNIST, and CIFAR10. The last two datasets have been transformed into binary classification problems by considering data with two labels, digits two and four (`2' and `4') classes for MNIST, and cat and dog classes for CIFAR-10. In Covtype.binary we use 50,000 samples as training data and each data has dimension 54. In MNIST we use 10,000 samples as training data and each data has dimension 784. In CIFAR-10 we use 10,000 training data and each data has dimension 3072.  All features have been preprocessed and normalized to the unit vector with sklearn's normalizer\footnote{\url{https://scikit-learn.org}}.
\begin{figure*}[h!]
	\centering
	\includegraphics[scale=0.35]{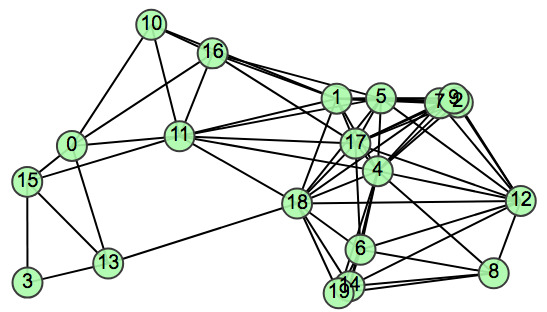}
	\caption{The network topology used in the simulation.}
	\label{fig-network}
\end{figure*}

For the network, we generated a randomly connected network with $K=20$ agents, which is shown in Fig. \ref{fig-network}. The associated combination matrix $A$ is generated according to the Metropolis rule \cite{sayed2014nowbook}.  For all simulations, we assign data evenly to each  agent. We set $\lambda=10^{-4}$ and $\rho=2\times10^{-3}$ for Covtype, $\lambda=10^{-2}$ and $\rho=5\times10^{-4}$ for CIFAR-10, and $\lambda=10^{-4}$ and $\rho=2\times10^{-3}$ for MNIST. The simulation results are shown in Figure \ref{fig-lr}. The decentralized implementations of Prox-ED, Prox-ATC I, and prox-ATC II are given in Appendix \ref{supp_equiva_represent_prox}.    For each algorithm, we tune the step-sizes manually to achieve the best possible convergence rate.   We notice that the performance of each algorithm  differs in each data set and Prox-ED performs the best in our simulation setup. The $x$-axis in these plots is in terms of rounds of communication per iteration. Note that Prox-ATC I and Prox-ATC II require two rounds of communication per iteration compared to only one round for all other algorithms -- see Remark \ref{remak:sharing-variable}.
\begin{figure*}[t!]
	\centering
	\includegraphics[scale=0.35]{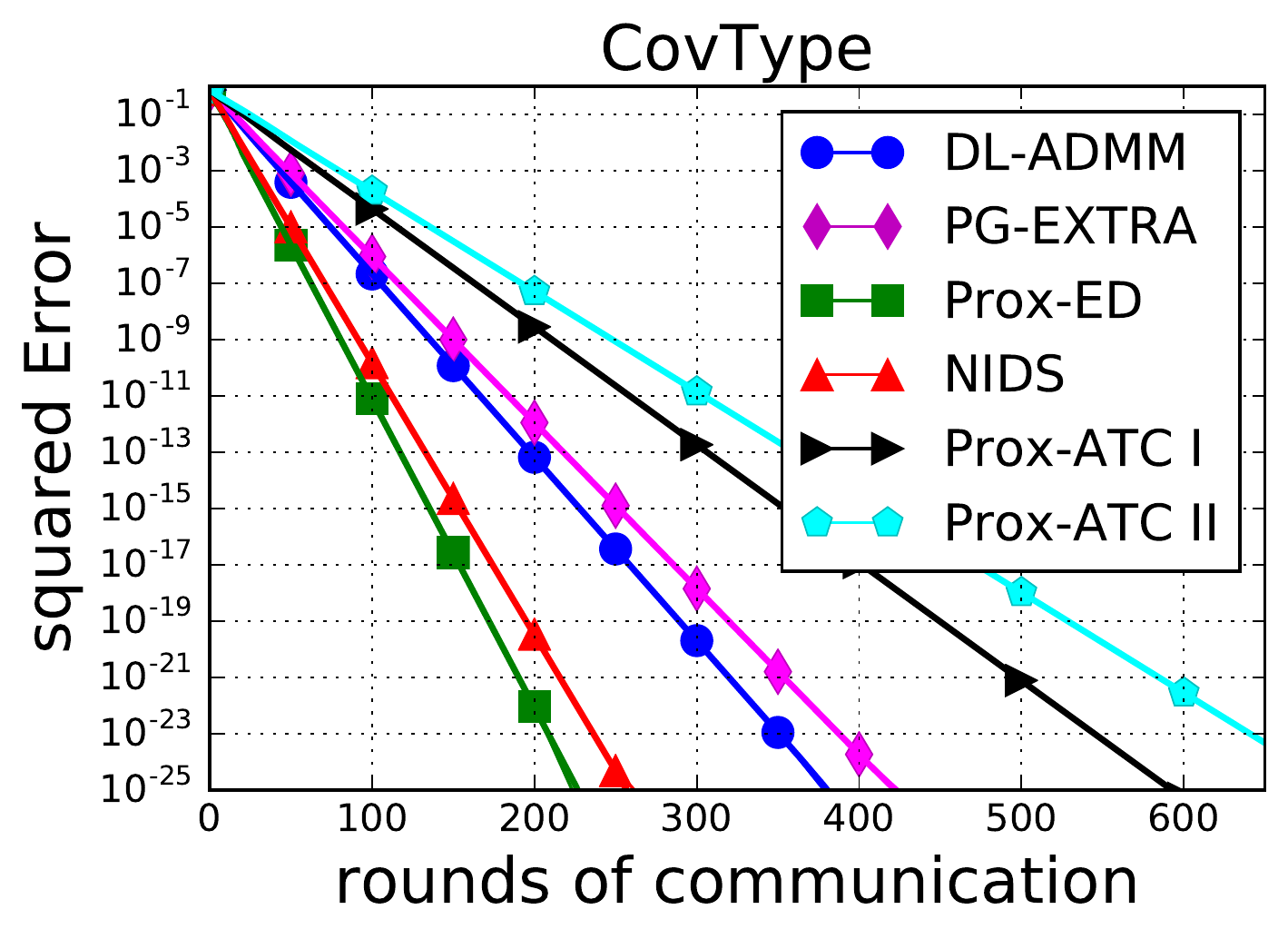}
	\includegraphics[scale=0.35]{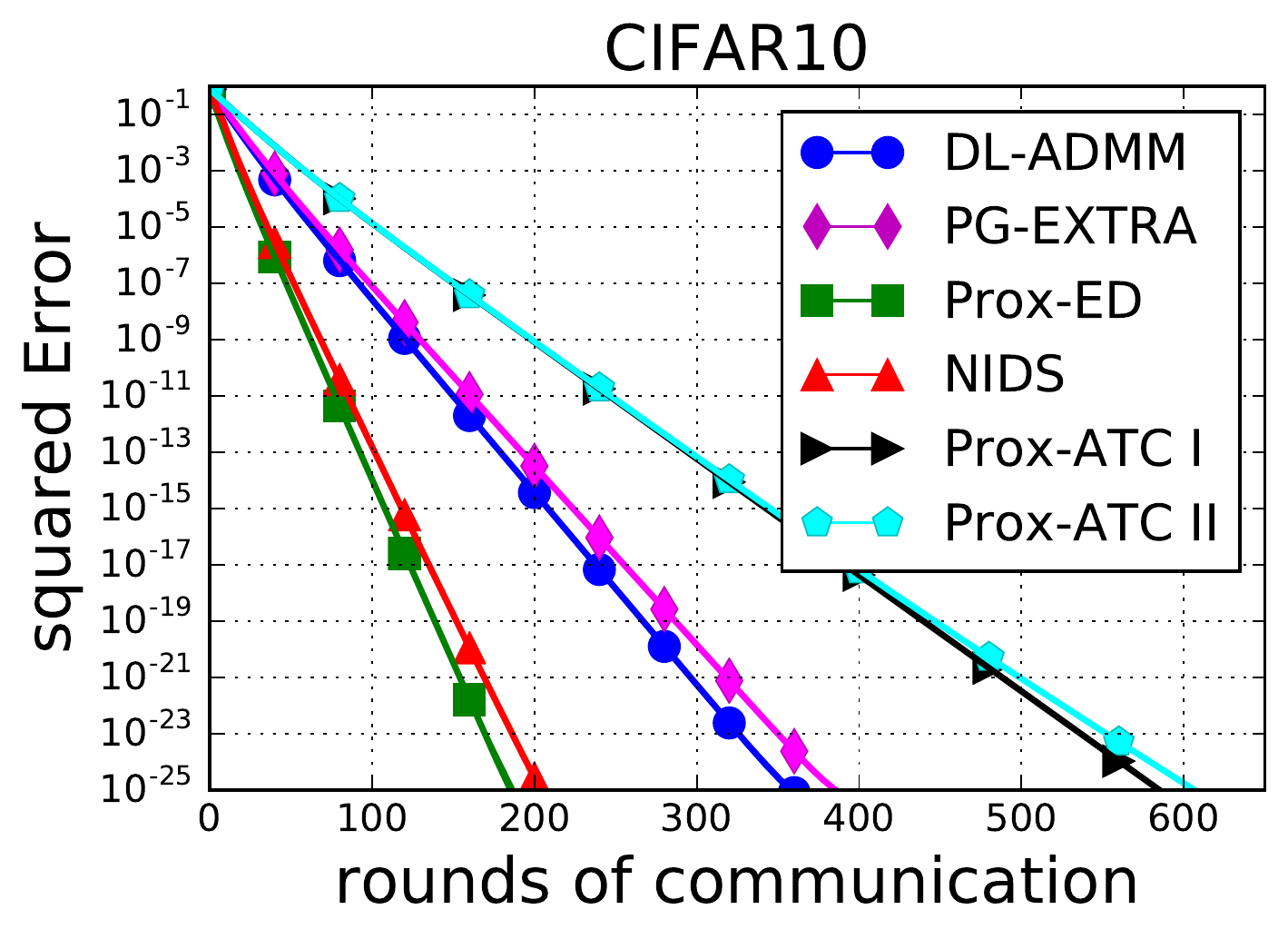}
	\includegraphics[scale=0.35]{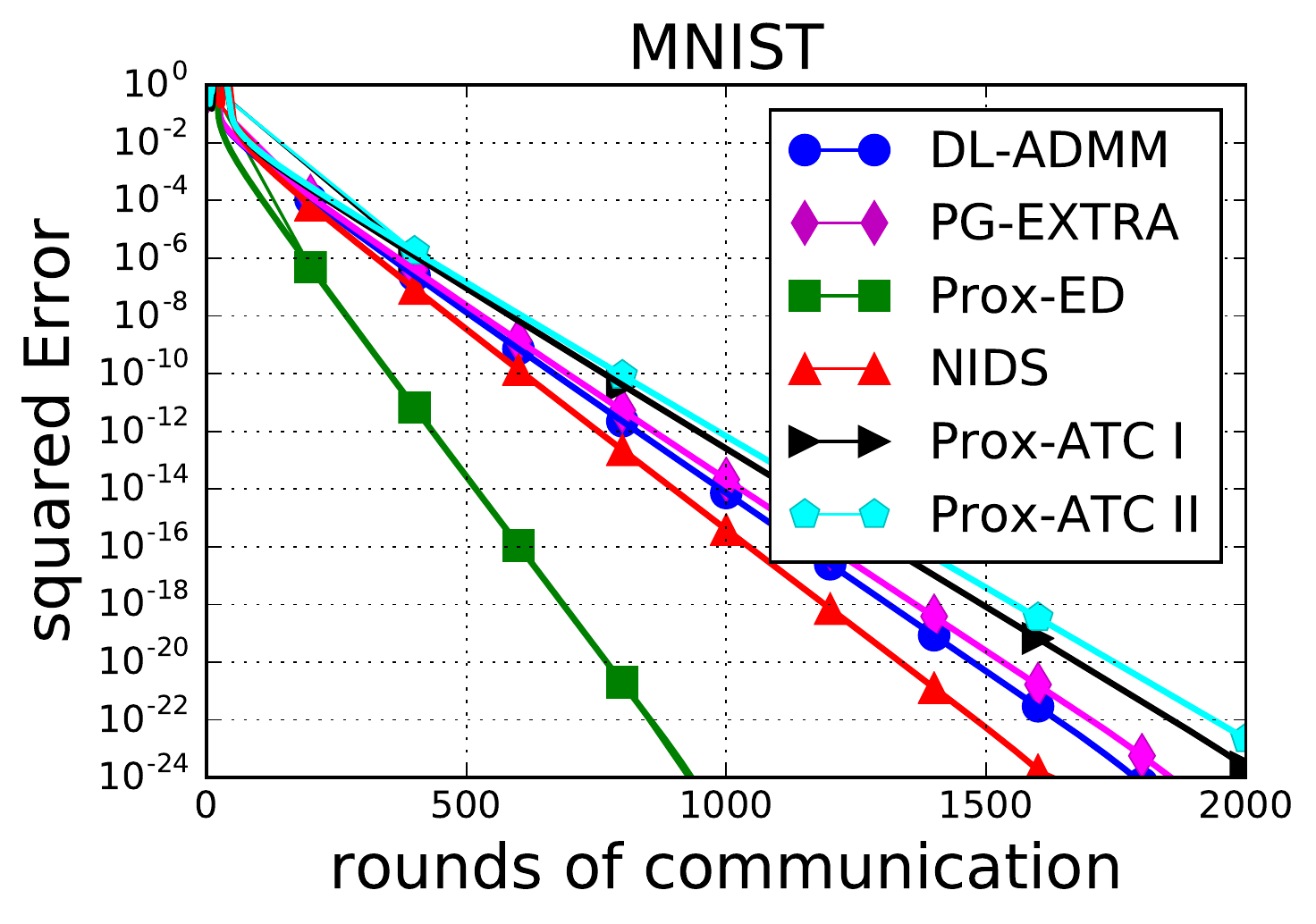}
	\caption{ \footnotesize Simulation results. The $y$-axis indicates the relative squared error $\sum_{k=1}^{K}\|w_{k,i} - w^\star\|^2/\|w^\star\|^2$. Prox-ED refers to \eqref{alg_prox_ATC_framework} with $\bar{\cA}=0.5 (I+\cA)$, $\cB^2=0.5 (I- \cA)$, and $\cC=0$. Prox-ATC I refers to \eqref{alg_prox_ATC_framework} with $\bar{\cA}=\cA^2$, $\cB=I-\cA$, and $\cC=0$. Prox-ATC II refers to \eqref{alg_prox_ATC_framework} with $\bar{\cA}=\cA$, $\cB=I-\cA$, and $\cC=I-\cA$. DL-ADMM  \cite{chang2015multi}, PG-EXTRA \cite{shi2015proximal}, NIDS \cite{li2017nids}.
}
	\label{fig-lr}
\end{figure*} 

\section{Separate non-smooth terms: sublinear rate} \label{sec:sublinearbound}
 In this section, we will show that if each agent owns a different local non-smooth term, then {\em exact} global linear convergence cannot be attained in the worst case (for all problem
instances) although it can still be possible for some special cases.  Consider the more general problem with agent specific regularizers:
\eq{
\label{eq:separarate-regularizer}
\min_{w\in \real^M}\ \frac{1}{K}\sum_{k=1}^{K}J_k(w)+R_k(w),
}
where $J_k(w)$ is a strongly convex smooth function and $R_k(w)$ is non-smooth convex with closed form proximal mappings (each $J_k(w)$ and $R_k(w)$ are further assumed to be closed and proper functions). Although many algorithms (centralized and decentralized) exist that solve \eqref{eq:separarate-regularizer},  none have been shown to achieve linear convergence in the presence of general non-smooth proximal terms  $R_k(w)$.  In the following, by tailoring the results from \cite{woodworth2016tight}, we show that this is not possible when  having access to the proximal mapping of each individual non-smooth term   $R_k(w)$ separately.
\subsection{Sublinear Lower Bound} \label{sec-sublinear-bound}
Let $\mathcal{H}$ be a deterministic algorithm that queries
\[
\{J_k(\cdot), R_k(\cdot), \nabla J_k(\cdot), {\rm \bf prox}_{\mu_{i,k}   R_k}(\cdot)
\,|\,
\mu_{i,k}>0,\,
k=1,\dots,K
\}
\]
once for each iteration $i=0,1,\dots$.
To clarify, the scalar parameter $\mu_{i,k}>0$ can differ for $i=0,1,\dots$ and $k=1,\dots,K$ or they can be constants (e.g.\ $\mu_{i,k}=\mu >0$).
Note that $\mathcal{H}$ has the option to combine the queried values in any possible combination (e.g., it can only use certain information from certain communications). Thus, $\mathcal{H}$ includes decentralized algorithms in which communication is restricted to edges on a graph.

Consider the specific instance of \eqref{eq:separarate-regularizer}
\eq{
\min_{w\in \real^M} \ F_\nu(w)=\frac{\nu}{2}\|w\|^2+\frac{1}{K}\sum_{k=1}^{K} R_k(w)
\label{cost_F_nu}}
where $\nu>0$ and $J_k(w)= \frac{\nu}{2K}\|w\|^2$. Assume $R_k(w)<\infty$ if and only if $\|w\|\le B$ and $|R_k(w_1)-R_k(w_2)|\le G\|w_1-w_2\|$ for all $w_1,w_2$ (where $B$ and $G$ are some positive constants) such that 
$\|w_1\|\le B$ and $\|w_2\|\le B$. To prove that linear convergence is not possible, we will reduce our setup to $\min_{w\in \real^M}\, F_0(w)$, which has a known lower bound \cite{woodworth2016tight}.
Let $\mathcal{H}_o$ be a deterministic algorithm that queries
\[
\{ R_k(\cdot), {\rm \bf prox}_{\mu_{i,k} R_k(\cdot)}(\cdot)
\,|\,
\mu_{i,k}>0,\,
k=1,\dots,K
\}
\]
once for each iteration $i=0,1,\dots$ and communicates through a fully connected network.
The following result is a special case of the more general result  \cite[Theorem~1]{woodworth2016tight}.
\begin{theorem} 
\label{thm:woodworth_lower_bnd}
Let $0<B$, $0<G$, $2 \leq K$, and $0<\varepsilon<GB/12$.
For a large enough problem dimension $M=\mathcal{O}(KGB/\varepsilon)$, 
the algorithm $\mathcal{H}_o$
(in the worst case) requires $\mathcal{O}(GB/\varepsilon)$ or more iterations to find a $\hat{w}$ such that
$F_0(\hat{w})-\inf_{w}F_0(w)<\varepsilon$.
\end{theorem}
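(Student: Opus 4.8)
The plan is to obtain Theorem~\ref{thm:woodworth_lower_bnd} as a direct specialization of the prox-oracle lower bound \cite[Theorem~1]{woodworth2016tight}, so the bulk of the work is to set up the correspondence between the two oracle models rather than to rebuild the hard instance from scratch. With $\nu=0$ in \eqref{cost_F_nu} the quadratic terms $J_k(w)=\tfrac{\nu}{2K}\|w\|^2$ vanish, leaving exactly the finite-sum nonsmooth objective $F_0(w)=\tfrac{1}{K}\sum_{k=1}^{K}R_k(w)$ in which each component $R_k$ is convex, $G$-Lipschitz, and finite precisely on the ball $\{\|w\|\le B\}$. This is the class of averaged Lipschitz functions accessed through individual proximal oracles for which \cite{woodworth2016tight} constructs its adversarial instances, so the identifications $n=K$, Lipschitz constant $G$, and domain radius $B$ are the natural bridge.

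First I would verify that the algorithm class $\mathcal{H}_o$ is no stronger than the class against which \cite[Theorem~1]{woodworth2016tight} proves its bound. Three features must be accounted for: (i) $\mathcal{H}_o$ communicates over a \emph{fully connected} network; (ii) it may combine all queried values in an arbitrary deterministic fashion; and (iii) it may call ${\rm \bf prox}_{\mu_{i,k}R_k}$ with a different scaling $\mu_{i,k}>0$ at every step and for each agent. Feature (i) is the most favorable case for the algorithm, so a lower bound established under full connectivity immediately transfers to any sparser communication graph; features (ii) and (iii) are already permitted in the Woodworth--Srebro prox-oracle model, whose bound holds against any deterministic algorithm allowed to query ${\rm \bf prox}_{\beta R_k}$ for arbitrary $\beta>0$ and to post-process the responses without restriction. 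Establishing this containment places $\mathcal{H}_o$ squarely inside their framework.

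With the correspondence in place, I would instantiate the hard family of \cite[Theorem~1]{woodworth2016tight}: functions $\{R_k\}_{k=1}^{K}$, each $G$-Lipschitz on the ball of radius $B$ and embedded in dimension $M=\mathcal{O}(KGB/\varepsilon)$, engineered so that each prox query of a single $R_k$ reveals only a bounded amount of new information about the minimizer of $F_0$. The dimension is taken large enough that the relevant coordinates of the minimizer cannot all be uncovered in fewer than $\Omega(GB/\varepsilon)$ rounds, which in the regime $0<\varepsilon<GB/12$, $2\le K$, $0<G$, $0<B$ yields the stated iteration lower bound after translating ``number of prox queries'' into ``number of iterations,'' given that each iteration issues one prox call per agent.

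The main obstacle is the second step: carefully confirming that the flexibility of $\mathcal{H}_o$ --- in particular the per-step, per-agent choice of the prox parameter $\mu_{i,k}$ together with the unrestricted combination of past oracle responses --- is genuinely subsumed by the oracle model of \cite{woodworth2016tight}, since the reduction is only valid if the adversarial construction defeats \emph{every} algorithm in our class. The remaining bookkeeping (matching $n\leftrightarrow K$, tracking the constants $G$ and $B$ and the $GB/12$ threshold on $\varepsilon$, and reading off the final count) is routine once the model containment is secured.
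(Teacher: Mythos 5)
Your proposal is correct and follows essentially the same route as the paper: the paper offers no independent proof of this statement, asserting only that it ``is a special case of the more general result \cite[Theorem~1]{woodworth2016tight},'' which is exactly the specialization you carry out (with the identifications $n\leftrightarrow K$, Lipschitz constant $G$, radius $B$, and dimension $M=\mathcal{O}(KGB/\varepsilon)$). Your additional care in checking that the oracle model of $\mathcal{H}_o$ --- full connectivity, arbitrary deterministic post-processing, and per-iteration per-agent prox parameters $\mu_{i,k}$ --- is subsumed by the Woodworth--Srebro prox-oracle class makes explicit the containment the paper leaves implicit.
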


\noindent We argue that algorithm $\mathcal{H}$ cannot be too efficient at solving $\min_{w}\, F_\nu(w)$ with $\nu>0$ as otherwise it can be used  to efficiently solve $\min_{w}\, F_0(w)$ and contradict Theorem~\ref{thm:woodworth_lower_bnd}.

\begin{theorem}
\label{thm:main_lower_bound}
Let $0<\nu$, $0<B$,  $0<G$, $2 \leq K$, and $0<\varepsilon<G^2/(288\nu)$.
For a large enough problem dimension $M=\mathcal{O}(KG/\sqrt{\nu\varepsilon})$,
the algorithm $\mathcal{H}$ (in the worst case) requires 
$\mathcal{O}(G/\sqrt{\nu\varepsilon})$ or more iterations to find a $\hat{w}$ such that
$F_\nu(\hat{w})-\inf_{w}F_\nu(w)<\varepsilon$.

\end{theorem}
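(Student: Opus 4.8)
The plan is to prove Theorem~\ref{thm:main_lower_bound} by reduction: I will show that an efficient algorithm $\mathcal{H}$ for the strongly convex problem $\min_w F_\nu(w)$ with $\nu>0$ could be converted into an efficient algorithm $\mathcal{H}_o$ for the non-strongly-convex problem $\min_w F_0(w)$, thereby contradicting the lower bound of Theorem~\ref{thm:woodworth_lower_bnd}. The key observation is that $F_\nu(w) = F_0(w) + \tfrac{\nu}{2}\|w\|^2$, so the quadratic term is a known, globally smooth regularizer whose gradient $\nabla(\tfrac{\nu}{2}\|w\|^2)=\nu w$ is computable \emph{without} any oracle query. Since $J_k(w)=\tfrac{\nu}{2K}\|w\|^2$ in the instance \eqref{cost_F_nu}, every gradient query $\nabla J_k(\cdot)$ that $\mathcal{H}$ makes can be simulated by $\mathcal{H}_o$ internally at no cost, and the $R_k$ and proximal queries of $\mathcal{H}$ are exactly the oracle calls permitted to $\mathcal{H}_o$. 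Hence any run of $\mathcal{H}$ on $F_\nu$ can be emulated by $\mathcal{H}_o$ on the same family of non-smooth terms $\{R_k\}$ using the same number of iterations.

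\textbf{Translating the accuracy.} First I would fix the Woodworth--Srebro target accuracy $\varepsilon_0$ for the $F_0$ problem and relate it to the accuracy $\varepsilon$ achievable for $F_\nu$. The natural route is to relate function suboptimality to distance-to-optimum. By $\nu$-strong convexity of $F_\nu$, an iterate $\hat w$ with $F_\nu(\hat w)-\inf F_\nu<\varepsilon$ satisfies $\|\hat w-w_\nu^\star\|^2\le 2\varepsilon/\nu$, where $w_\nu^\star=\argmin F_\nu$. I then bound $F_0(\hat w)-\inf F_0$ in terms of $\|\hat w - w_\nu^\star\|$ and the distance between the minimizers of $F_0$ and $F_\nu$. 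Using that $R_k$ is $G$-Lipschitz on the ball of radius $B$, the aggregate non-smooth part is $G$-Lipschitz, so $F_0$ is $G$-Lipschitz on the feasible ball; this lets me convert the distance bound $\|\hat w-w_\nu^\star\|=O(\sqrt{\varepsilon/\nu})$ into a function-value bound $F_0(\hat w)-\inf F_0 = O(G\sqrt{\varepsilon/\nu})$, after controlling $\|w_\nu^\star - w_0^\star\|$ by a comparison-of-minimizers argument (the added $\tfrac{\nu}{2}\|w\|^2$ perturbs the optimum by $O(G/\nu)$, giving a value gap of order $\nu\|w_0^\star\|^2$, which I keep as a lower-order term).

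\textbf{Matching the parameters and deriving the bound.} Setting the $F_0$-accuracy to $\varepsilon_0 = \Theta(G\sqrt{\varepsilon/\nu})$, Theorem~\ref{thm:woodworth_lower_bnd} says solving $F_0$ to accuracy $\varepsilon_0$ needs $\Omega(GB/\varepsilon_0)$ iterations in dimension $M=\Theta(KGB/\varepsilon_0)$. Substituting $\varepsilon_0 = \Theta(G\sqrt{\varepsilon/\nu})$ yields the iteration lower bound $\Omega(GB/\varepsilon_0)=\Omega(B\sqrt{\nu/\varepsilon})$ and the dimension $M=\Theta(KG/\sqrt{\nu\varepsilon})$, matching the claimed $\mathcal{O}(G/\sqrt{\nu\varepsilon})$ scaling once $B$ is absorbed into the radius normalization (the Woodworth--Srebro construction is typically stated for $B$ of order one, or $B$ can be fixed so that $B\sqrt{\nu/\varepsilon}=\Theta(G/\sqrt{\nu\varepsilon})$). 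Because the emulation is iteration-for-iteration, the same count lower-bounds $\mathcal{H}$ on $F_\nu$. The restriction $\varepsilon<G^2/(288\nu)$ is exactly what guarantees the translated accuracy $\varepsilon_0$ stays below the $GB/12$ threshold required by Theorem~\ref{thm:woodworth_lower_bnd}, and I would track the constants through the chain to verify this.

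\textbf{Main obstacle.} The delicate part is the accuracy translation, not the reduction itself. The reduction is essentially free because $\nabla J_k=\nu w/K$ is oracle-independent, so the conceptual core is clean. The care goes into the quantitative chain $\varepsilon \to \|\hat w - w_\nu^\star\| \to F_0\text{-suboptimality} \to \varepsilon_0$: I must ensure the minimizer displacement $\|w_\nu^\star-w_0^\star\|$ and the residual term $\tfrac{\nu}{2}\|w_0^\star\|^2$ do not dominate, so that $\varepsilon_0$ genuinely scales as $\sqrt{\varepsilon}$ rather than $\varepsilon$. Getting the exponent right here is what produces the characteristic $1/\sqrt{\nu\varepsilon}$ strongly-convex rate from the $1/\varepsilon$ convex rate, and is where the constant $288$ and the dimension scaling are pinned down.
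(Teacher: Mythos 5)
Your reduction step coincides with the paper's: since $\nabla J_k(w)=\tfrac{\nu}{K}w$ is available in closed form without any oracle call, a run of $\mathcal{H}$ on $F_\nu$ can be emulated iteration-for-iteration by an algorithm of type $\mathcal{H}_o$ acting only on $\{R_k\}$, so a too-fast $\mathcal{H}$ would contradict Theorem~\ref{thm:woodworth_lower_bnd}. The genuine gap is in your accuracy translation, and it cannot be repaired in the form you propose. Your chain gives, for an $\varepsilon$-minimizer $\hat w$ of $F_\nu$, the bound $F_0(\hat w)-F_0(w^\star_0)\le G\sqrt{2\varepsilon/\nu}+\tfrac{\nu}{2}\|w^\star_0\|^2$, and the residual term can be as large as $\tfrac{\nu}{2}B^2$, since nothing constrains the hard instance's minimizer beyond $\|w^\star_0\|\le B$. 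To extract the claimed $G/\sqrt{\nu\varepsilon}$ iteration count from Theorem~\ref{thm:woodworth_lower_bnd} with $\varepsilon_0=\Theta(G\sqrt{\varepsilon/\nu})$ you need $GB/\varepsilon_0=\Theta(G/\sqrt{\nu\varepsilon})$, i.e.\ $B=\Theta(G/\nu)$; but then the residual is $\Theta(G^2/\nu)$, while the hypothesis $\varepsilon<G^2/(288\nu)$ forces $G\sqrt{2\varepsilon/\nu}<G^2/(12\nu)$, so the residual \emph{dominates} the term you treat as leading (by a factor of at least $6$). The true translated accuracy is then $\varepsilon_0=\Theta(\nu B^2)=\Theta(G^2/\nu)$, for which Theorem~\ref{thm:woodworth_lower_bnd} yields only $GB/\varepsilon_0=\Theta(1)$ iterations: no contradiction. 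Even if you choose $B$ optimally to balance the two terms ($\nu B^2\asymp G\sqrt{\varepsilon/\nu}$), the best iteration count your chain can certify is $\Theta\bigl(\sqrt{G/\sqrt{\nu\varepsilon}}\bigr)$, the square root of the claimed bound; so the loss is structural, not a matter of tracking constants.

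The resolution used in the paper is to abandon the strong-convexity/distance/Lipschitz conversion entirely and keep the \emph{same} target accuracy (up to a factor $2$) for both problems, exploiting that the perturbation is uniformly small on the feasible set: since each $R_k$ equals $+\infty$ outside $\|w\|\le B$, one has $F_0(w)\le F_\nu(w)\le F_0(w)+\tfrac{\nu B^2}{2}$ for all feasible $w$. The radius of the worst-case instance is then \emph{tied to the parameters} by $\nu=\varepsilon/B^2$, i.e.\ $B=\sqrt{\varepsilon/\nu}$, which makes the uniform gap exactly $\varepsilon/2$; consequently any $\hat w$ with $F_\nu(\hat w)-F_\nu(w^\star_\nu)<\varepsilon/2$ satisfies $F_0(\hat w)-F_0(w^\star_0)<\varepsilon$, using only $F_0\le F_\nu$ and $F_\nu(w^\star_\nu)\le F_\nu(w^\star_0)$. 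The $1/\sqrt{\nu\varepsilon}$ rate then emerges not from a $\sqrt{\varepsilon}$ accuracy conversion but from the substitution $GB/\varepsilon=G/\sqrt{\nu\varepsilon}$, and the threshold $\varepsilon<G^2/(288\nu)$ is exactly the condition $\varepsilon<GB/12$ of Theorem~\ref{thm:woodworth_lower_bnd} rewritten under $B=\sqrt{\varepsilon/\nu}$ (with the factor $2$ from using accuracy $\varepsilon/2$ on $F_\nu$). Keeping $B$ free and compensating with a lossy accuracy translation, as your plan does, creates a tension that provably cannot produce the stated bound.
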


\begin{proof}
 This argument modifies the proof of  \cite[Theorem~2]{woodworth2016tight},  which makes  a similar but slightly different claim. Let $\nu=\varepsilon/B^2$ and  $w^\star_\nu$ denotes the minimizer of $F_\nu$. 
Assume for contradiction that $\mathcal{H}$ can find a $\hat{w}$ such that
\eq{
F_\nu(\hat{w})-F_\nu(w^\star_\nu) < \frac{\varepsilon}{2} \label{eq:contrad}
}
in $o(G/\sqrt{\nu\varepsilon})$ iterations.
Note that for all $w$ such that $\|w\|\le B$, it holds from \eqref{cost_F_nu}:
\eq{
F_\nu(w)\le F_0(w)+\frac{\nu B^2}{2}= F_0(w)+\frac{\varepsilon}{2}. \label{eq:FvleqFo}
}
Putting these together, we get
\eq{
F_0(\hat{w})-F_0(w^\star_0)-\frac{\varepsilon}{2} \overset{\eqref{eq:FvleqFo}}{\leq} F_0(\hat{w})-F_\nu(w^\star_0) \overset{(a)}{\leq} F_\nu(\hat{w})-F_\nu(w^\star_\nu)  < \frac{\varepsilon}{2},
}
where in step (a) we used $F_0(w)\le F_\nu(w)$ and $F_\nu(w^\star_\nu) \leq F_\nu(w^\star_0) $. We conclude that $F_0(\hat{w})-F_0(w^\star_0) < \varepsilon$. 
Since $\nabla J_k(\cdot)={\nu \over K} I$ is just a scaled identity, querying $\nabla J_k(\cdot)$ does not provide a new direction that $\mathcal{H}_o$ could otherwise not use. Thus, algorithm $\mathcal{H}$ applied to minimizing $F_\nu$ is an instance of algorithm $\mathcal{H}_o$. This means that we have an algorithm for minimizing $F_0$ in 
$o(G/\sqrt{\nu\varepsilon})=o(GB/\varepsilon)$ iterations, which contradicts Theorem~\ref{thm:woodworth_lower_bnd}. Note that  $0 < \varepsilon <GB/12$ from Theorem~\ref{thm:woodworth_lower_bnd} and by using $\nu=\varepsilon/B^2$ we require $0< \varepsilon <G^2/(288\nu)$ (an extra factor of $2$ appears because of \eqref{eq:contrad}). 
\end{proof}

\begin{corollary}
For the problem setup of \eqref{eq:separarate-regularizer} with strongly convex $J_k(\cdot)$ for all $k=1,2,\dots,K$,
any algorithm that accesses the functions through
evaluations of $J_k(\cdot)$ and $R_k(\cdot)$, the gradients of $J_k(\cdot)$, and proximal operators of $R_k(\cdot)$
is not globally linearly convergent (in the worst case).
\end{corollary}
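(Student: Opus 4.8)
The plan is to argue by contradiction, converting the qualitative notion of \emph{global linear convergence} into a concrete iteration-complexity bound and then pitting it against the polynomial lower bound already established in Theorem~\ref{thm:main_lower_bound}. Concretely, global linear convergence (in the worst case) means there exist constants $C>0$ and $\gamma\in(0,1)$, \emph{independent of the problem instance}, such that the error after $i$ iterations is at most $C\gamma^{i}$ on every instance of \eqref{eq:separarate-regularizer} with strongly convex $J_k$; whether this error is measured in iterate distance $\|w_i-w^\star\|^2$ or in the function gap $F(w_i)-\inf F$ is immaterial, since on the bounded feasible ball the two are polynomially related through the $G$-Lipschitz bound on $R_k$ and the strong convexity of $\frac1K\sum_k J_k$. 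Solving $C\gamma^{i}\le\varepsilon$ for $i$ then shows that such an algorithm would reach accuracy $\varepsilon$ in $i=\mathcal{O}(\log(1/\varepsilon))$ iterations on \emph{every} such instance.

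First I would observe that the oracle access allowed in the corollary --- evaluations of $J_k(\cdot)$ and $R_k(\cdot)$, gradients $\nabla J_k(\cdot)$, and proximal maps of $R_k(\cdot)$ --- is precisely the access granted to the algorithm $\mathcal{H}$ in Section~\ref{sec-sublinear-bound}. Next I would note that the family $F_\nu$ in \eqref{cost_F_nu}, with $J_k(w)=\frac{\nu}{2K}\|w\|^2$ (which is $\frac{\nu}{K}$-strongly convex) and the $G$-Lipschitz non-smooth terms $R_k$, is a legitimate instance of \eqref{eq:separarate-regularizer} with strongly convex smooth parts. Theorem~\ref{thm:main_lower_bound} then guarantees that, in the worst case over this family, the algorithm needs $\Omega\!\left(G/\sqrt{\nu\varepsilon}\right)=\Omega(\varepsilon^{-1/2})$ iterations to reach accuracy $\varepsilon$.

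The contradiction is then immediate from the asymptotic comparison $\log(1/\varepsilon)=o(\varepsilon^{-1/2})$ as $\varepsilon\to0$: a uniform linear rate forces $\mathcal{O}(\log(1/\varepsilon))$ iterations, whereas the hard instances force at least $\Omega(\varepsilon^{-1/2})$, so for all sufficiently small $\varepsilon$ no single pair $(C,\gamma)$ can hold across the whole family. Hence no algorithm in the stated class can be globally linearly convergent in the worst case.

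The step I expect to be most delicate is not any computation but the correct quantification of ``worst case.'' The lower bound of Theorem~\ref{thm:main_lower_bound} is not witnessed by one fixed problem; it requires the dimension to grow as $M=\mathcal{O}(KG/\sqrt{\nu\varepsilon})$ as $\varepsilon\to0$, so the obstruction is a \emph{sequence} of instances of increasing dimension rather than a single instance. The argument must therefore refute a uniform rate $(C,\gamma)$ by exhibiting, for each candidate rate and each small $\varepsilon$, a member of this growing family on which $\Omega(\varepsilon^{-1/2})$ iterations are unavoidable. One must also carry over the key observation from the proof of Theorem~\ref{thm:main_lower_bound} that, because $\nabla J_k$ is a scaled identity, the gradient queries convey no information beyond what the prox-only algorithm $\mathcal{H}_o$ already possesses; this is exactly what lets the strongly-convex lower bound inherit the non-smooth lower bound of Theorem~\ref{thm:woodworth_lower_bnd}.
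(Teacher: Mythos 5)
Your proposal is correct and follows essentially the same route as the paper, which states the corollary as an immediate consequence of Theorem~\ref{thm:main_lower_bound}: a uniform linear rate would give $\mathcal{O}(\log(1/\varepsilon))$ iteration complexity, contradicting the $\Omega(G/\sqrt{\nu\varepsilon})$ lower bound. Your closing observation that the hard instances form a sequence of growing dimension (so only dimension-independent linear rates are ruled out) is exactly the caveat the paper itself records in Remark~\ref{remark:lowerbound_dimension}.
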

\begin{remark} \label{remark:lowerbound_dimension} 
{\rm The lower bound of Theorem \ref{thm:main_lower_bound} is \emph{dimension independent} in the same way other Nesterov-type lower bounds are \cite{woodworth2016tight,nesterov2013introductory}.
The result implies that it is not possible to establish linear convergence of $\cH$ with a rate depending on $\nu$ and $G$, but not on the problem dimension $K$.
That said, a dimension dependent linear convergence may be established.
For example,  {\em eventual} linear convergence\footnote{A sequence $\{x_i\}_{i=0}^\infty$ has eventual linear convergence  to $x^\star$  if there exists a sufficiently large $i_o$ such that $\|x_i-x^\star\| \leq \gamma^i C$ for some $C>0$ and all $i \geq i_o$.} has been established in \cite{latafat2017new} when  the functions $\{J_k(\cdot),R_k(\cdot)\}$ are piecewise linear quadratic.
This result does not contradict our result as the linear rate and the number of iterations needed to observe the linear rate are \emph{dependent} on the problem dimension.
Our linear convergence result of Theorem \ref{theorem_lin_convergence} is dimension independent as it holds for any dimension $M$.
} \qd
\end{remark}
\subsection{Numerical Counterexample}
In this section, we numerically show that linear convergence to the exact solution $w^\star$ is not possible in general.   We consider an instance of \eqref{eq:separarate-regularizer} with $K = 2$,  $M$  is a very large  even number, and quadratic smooth terms $J_k(w)=\eta/2 \|w\|^2$ for some $\eta >0$. We let the non-smooth terms be
\begin{subequations}\label{Rk}
	\eq{
		R_1(w)&=|\sqrt{2}w(1)-1| \hspace{-0.5mm}+\hspace{-0.5mm} |w(2)-w(3)| \hspace{-0.5mm}+\hspace{-0.5mm} |w(4)-w(5)| \hspace{-0.5mm}+\hspace{-0.5mm} \cdots \hspace{-0.5mm}+\hspace{-0.5mm}|w(M\hspace{-0.5mm}-\hspace{-0.5mm}2)\hspace{-0.5mm}-\hspace{-0.5mm}w(M\hspace{-0.5mm}-\hspace{-0.5mm}1)| \\
		R_2(w)&=|w(1)-w(2)|+|w(3)-w(4)|+\cdots
		+|w(M-1)-w(M)| 
	}
\end{subequations}
 Both ${\bf prox}_{R_1}$ and ${\bf prox}_{R_2}$ have closed forms --- see Appendix \ref{app_counter_example_proximal} for details.
 The above construction is related to the one in \cite{arjevani2015communication}, which was used to derive lower bounds for a different class of algorithms as explained in the introduction.

In the numerical experiment, we test the performance of two well known decentralized proximal methods, PG-EXTRA \cite{shi2015proximal} and DL-ADMM \cite{chang2015multi,aybat2018distributed}. Note that the structure of updates \eqref{alg_prox_ATC_framework}  are designed to handle a common non-smooth term case only, which is why we do not test it in this numerical counterexample. We set $M=2000$ and $\eta = 1$. The step-sizes for both PG-EXTRA and DL-ADMM are set to $0.005$. The combination matrix is set as $A = \frac{1}{2}\mathds{1}_2 \mathds{1}_2\tran$.  The numerical results in the left plot of Fig. \ref{fig-counter-example} shows that both PG-EXTRA and DL-ADMM converge sublinearly to the solution. In particular, we see that the error curves after around $10^3$ iterations has sublinear convergence. The right plot in Fig. \ref{fig-counter-example} shows the squared error where both $x$-axis and $y$-axis are in logarithmic scales. In this scale, a straight line indicates a sublinear rate, which is clearly visible after around $10^3$ iterations.  
No global linear convergence is observed in the simulation for sufficiently large dimension $M$ and algorithms independent of $M$, which is consistent with our discussion in Remark \ref{remark:lowerbound_dimension}. 
\begin{figure*}[h!]
\centering
	\includegraphics[scale=0.55]{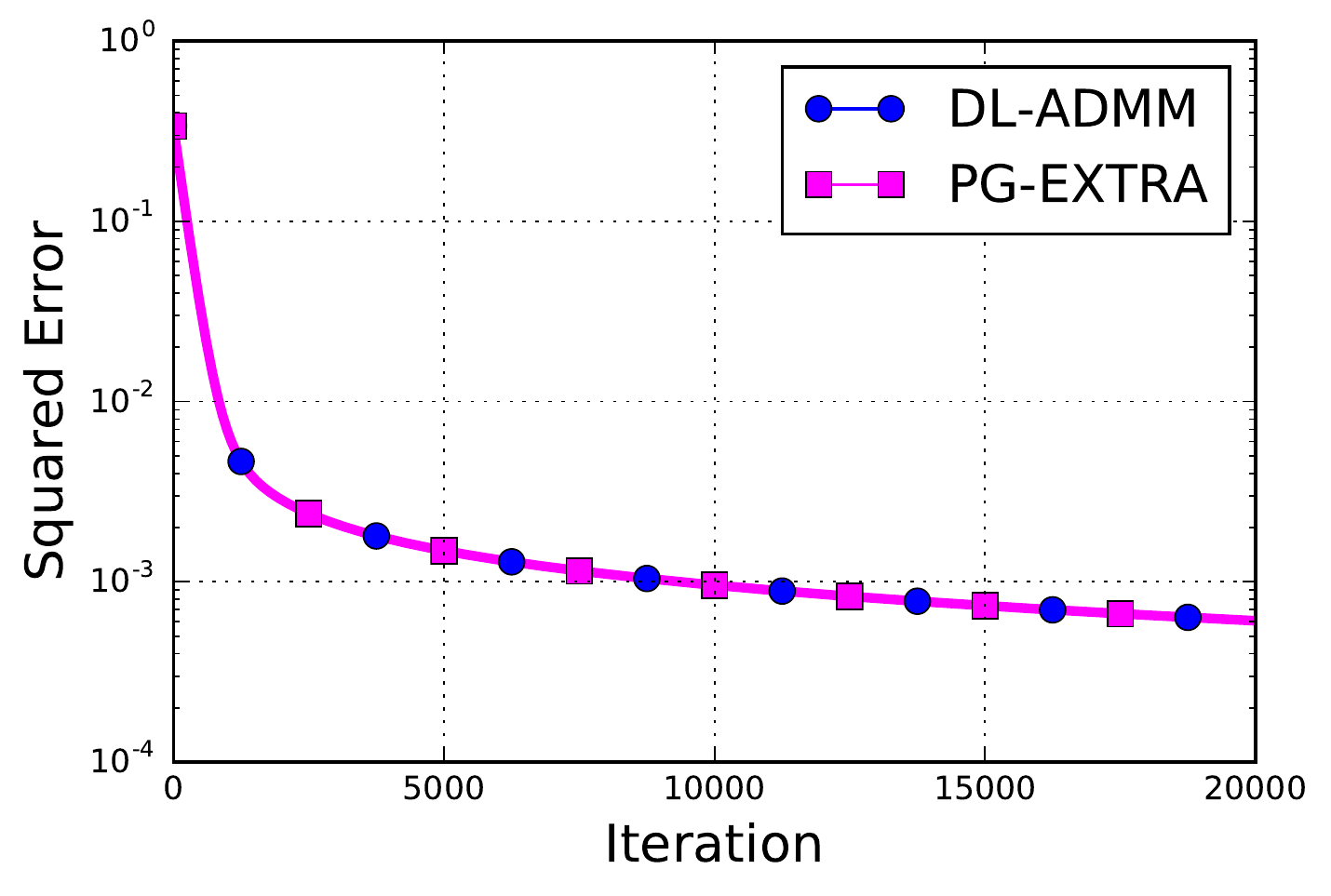}
	\includegraphics[scale=0.55]{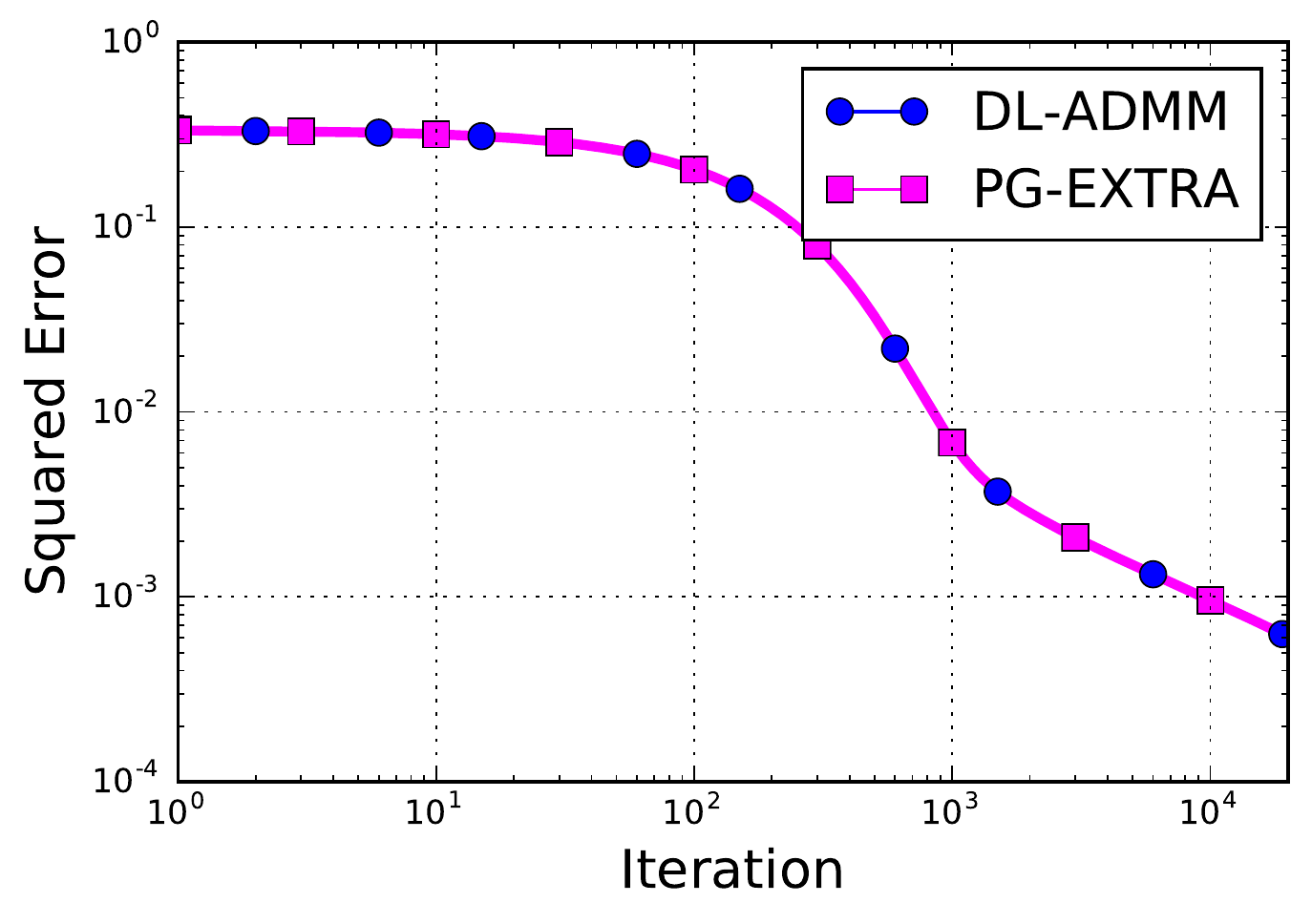}
	\caption{ \footnotesize Numerical counterexample simulations. Both $y$-axis and $x$-axis are in logarithmic scales in the right plot.  PG-EXTRA \cite{shi2015proximal} and DL-ADMM \cite{chang2015multi,aybat2018distributed} converge sublinearly to the solution
 of the proposed numerical counterexample.}
	\label{fig-counter-example}
\end{figure*} 
\section{Concluding Remarks}
In this work, we proposed a proximal primal-dual algorithmic framework, which subsumes many existing algorithms in the smooth case, and established its linear convergence under strongly-convex objectives.     Our analysis provides wider step-size conditions than many existing works, which provides insightful indications on the performance of each algorithm. That said, these step-size bound comes  at the expense of stronger assumption on the combination matrices -- see Remark \ref{remark:conv_conditions}.  It is therefore of interest to study the interrelation between the step-sizes and combination matrices for linear convergence.  Regarding the discussion below Theorem \ref{theorem_lin_convergence}, a useful future direction is to study how to optimally choose $\bar{\cA}$, $\cB$, and $\cC$ as a function of $\cA$ to get the best possible convergence rate while balancing the communication cost per iteration.

\medskip
{\small
\bibliographystyle{ieeetr}
\bibliography{myref_atc} 
}
\newpage
\appendices
\section{Proof of Lemma \ref{lemma:existence_fixed_optimality}}\label{supp_lemma_fixed}
To show the existence we will construct a point $(\sw^\star, \sy^\star, \sz^\star)$ that satisfies equations \eqref{p-d_ed-star}--\eqref{prox_step_ed-star}. Since each $J_k(w)$ is strongly convex, there exists a unique solution $w^\star$ for problem \eqref{decentralized1}, i.e., 
$
0 \in \frac{1}{K}\sum_{k=1}^K \grad J_k(w^\star) + \partial R(w^\star).	
$
This also indicates that there must exist a subgradient $r^\star \in \partial R(w^\star)$ such that 
\eq{
\frac{1}{K}\sum_{k=1}^K \grad J_k(w^\star) + r^\star = 0}
 Now we define $z^\star \define (\mu/K)r^\star + w^\star$, it holds that $r^\star/K + (w^\star - z^\star)/\mu = 0$, i.e., $0\in (1/K)\partial R(w^\star) + (1/\mu) (w^\star - z^\star)$. This implies that 
\eq{
w^\star = \argmin_w \left\{ \frac{1}{K} R(w) + \frac{1}{2\mu}\|w - z^\star\|^2 \right\}. \label{prox_wstar}	
}
We next define $\sw^\star \define \one_{K} \otimes w^\star$ and $\sz^\star \define \one_{K} \otimes z^\star $. Since $\sz^\star = \mathds{1}_K \otimes z^\star$, it belongs to the null space of $\cB$ so that  $\cB\sz^\star = 0$ and, moreover, $\bar{\cA} \sz^\star=\sz^\star$  since $\bar{\cA}=\bar{A} \otimes I_M$ where $\bar{A}$ is doubly stochastic. Therefore,  relation \eqref{prox_wstar} implies that  equation \eqref{prox_step_ed-star} holds. It remains to construct $\sy^\star$ that satisfies equation \eqref{p-d_ed-star}. Note that
\eq{
(\mathds{1}_N\otimes I_M)\tran \big(\sw^\star - \sz^\star - \mu \grad \cJ(\sw^\star)\big) = - \mu r^\star - \frac{\mu}{K} \sum_{k=1}^K \grad  J_k(w^\star) = 0, \label{2387sdhsdh}
}
where the last equality holds because $w^\star$ is the optimal solution of problem \eqref{decentralized1}. Equation \eqref{2387sdhsdh} implies  
\eq{
\frac{1}{\mu}\big( \sw^\star - \sz^\star - \mu \grad \cJ(\sw^\star) \big) \in \mbox{Null}(\mathds{1}_N\otimes I_M) =\mbox{Null}(\cB)^{\perp}= \mbox{Range}(\cB).
}
where $\perp$ denotes the orthogonal complement. Therefore, there exist a vector $\sy^\star$ satisfying equation \eqref{p-d_ed-star}.

We now show  that any fixed point  is of the form $\sw^\star = \one_{K} \otimes w^\star$ and $w^\star$ is the solution to problem \eqref{decentralized1}. From \eqref{d-a_ed-star} and \eqref{consensus-condition-both}, it holds that the block elements of $\sz^\star$ are equal to each other, i.e. $z_1^\star = \cdots = z_K^\star$, and we denote each block element by $z^\star$.  Thus,   $\bar{\cA} \ssz^\star=\ssz^\star= \one_K \otimes z^\star$ because $\bar{\cA}=\bar{A} \otimes I_M$ where $\bar{A}$ is doubly stochastic.
 Therefore, from  \eqref{prox_step_ed-star} and the definition of the proximal operator it holds that
 \eq{
 w_k^\star =\argmin_{w_k} \ \{R(w_k)/K + \|w_k - z^\star\|^2/2\mu\}
\label{238impl} }
  where we used $z_k^\star = z^\star$ for each $k$. Thus, we must have $w_1^\star = \cdots = w_K^\star \define w^\star$. It is easy to verify that \eqref{238impl} implies
\eq{
0 \in \partial R(w^\star)/K + (w^\star - z^\star)/\mu. \label{238sdhsd}
} 
 Multiplying $(\mathds{1}_K\otimes I_M)\tran$ from the left to both sides of equation \eqref{p-d_ed-star}, we get 
\begin{align}
	K z^\star = K w^\star - \frac{\mu}{K}\sum_{k=1}^K \grad J_k(w^\star) \label{cxbwebdshbds}
\end{align}
Combining \eqref{238sdhsd} and \eqref{cxbwebdshbds}, we get
$0 \in 	\frac{1}{K}\sum_{k=1}^K \grad J_k(w^\star) + \partial R(w^\star)$. 
Thus, $w^\star$ is the unique solution to problem \eqref{decentralized1}. Due to the uniqueness of $w^\star$, we see from \eqref{cxbwebdshbds} that $z^\star$ is unique. Consequently, $\sw^\star=\one_K \otimes w^\star $ and $\sz^\star=\one_K \otimes z^\star$ must be unique.
\section{Equivalent Representation}
\label{supp_equiva_representation}
\subsection{Aug-DGM (ATC-DIGing)}
Here we show that \eqref{atc_DGM} is equivalent to \eqref{atc_DGM_eliminate}. From \eqref{atc-dgm1} we have
\eq{
\sw_i-\cA \sw_{i-1} &=\cA \bigg(\sw_{i-1}-\cA \sw_{i-2}-\mu \big(  \ssx_{i-1}-\cA \ssx_{i-2} \big) \bigg) \nnb
& \overset{\eqref{atc-dgm2}}{=} \cA \bigg(\sw_{i-1}-\cA \sw_{i-2}-\mu \cA \big(\grad \cJ(\sw_{i-1})-\grad \cJ(\sw_{i-2})\big)  \bigg) \nonumber
}
Rearranging the previous equation we get:
\eq{
\sw_i = \cA \bigg(2\sw_{i-1}-\cA \sw_{i-2}-\mu \cA \big(\grad \cJ(\sw_{i-1})-\grad \cJ(\sw_{i-2})\big)  \bigg) \nonumber
}
which is recursion \eqref{atc_DGM_eliminate}.
 
\subsection{ATC-Tracking}
In a similar manner we can show that \eqref{next} is equivalent to \eqref{next_eliminate}. From \eqref{next1} we have
\eq{
\sw_i-\cA \sw_{i-1} &=\cA \bigg(\sw_{i-1}-\cA \sw_{i-2}-\mu \big(  \ssx_{i-1}-\cA \ssx_{i-2} \big) \bigg) \nnb
& \overset{\eqref{next2}}{=} \cA \bigg(\sw_{i-1}-\cA \sw_{i-2}-\mu  \big(\grad \cJ(\sw_{i-1})-\grad \cJ(\sw_{i-2})\big)  \bigg) \nonumber
}
Rearranging the previous equation we get:
\eq{
\sw_i = \cA \bigg(2\sw_{i-1}-\cA \sw_{i-2}-\mu  \big(\grad \cJ(\sw_{i-1})-\grad \cJ(\sw_{i-2})\big)  \bigg) \nonumber
}
which is recursion \eqref{next_eliminate}.
\section{Implementation of \eqref{alg_prox_ATC_framework}}
\label{supp_equiva_represent_prox}
\subsection{ Prox-ED: $\bar{\cA}=0.5 (I+\cA)$, $\cB^2=0.5 (I- \cA)$, and $\cC=0$}
 Recursion \eqref{alg_prox_ATC_framework} with $\bar{\cA}=0.5 (I+\cA)$, $\cB^2=0.5 (I- \cA)$, and $\cC=0$ is equivalent to  the proximal exact diffusion (Prox-ED) recursion listed in \eqref{prox_diff_agent_adapt}--\eqref{prox_diff_agent_proximal}.	To see this, note that for $i=0$, it is straight forward to check that each block in \eqref{primal_prox_ATC_DIG} is the same as $w_{k,0}$ in \eqref{prox_diff_agent}. Now we will show the equivalence   for $i \geq 1$.  From \eqref{z_prox_ATC_DIG}, we know that:
 \eq{
 \ssz_i-\ssz_{i-1} &= \sw_{i-1}-\sw_{i-2}-\mu \big(\grad \cJ(\sw_{i-1})-\grad \cJ(\sw_{i-2})\big) -  \cB (\sy_{i-1}-\sy_{i-2}) \nnb
 &= \sw_{i-1}-\sw_{i-2}-\mu \big(\grad \cJ(\sw_{i-1})-\grad \cJ(\sw_{i-2})\big) -  \cB^2 \ssz_{i-1}
 }
where we used \eqref{dual_prox_ATC_DIG} in the last step.  Re-arranging and noting that $\cB^2=0.5 (I- \cA)$ we get
  \eq{
 \ssz_i&=\bar{\cA} \ssz_{i-1}+ \sw_{i-1}-\sw_{i-2}-\mu \big(\grad \cJ(\sw_{i-1})-\grad \cJ(\sw_{i-2})\big) 
 }
 By multiplying $\bar{\cA}$ to both sides of the previous equation and introducing $\ssx_i \define \bar{\cA} \ssz_i$ we get
  \eq{
 \ssx_i&=\bar{\cA} \bigg( \ssx_{i-1}+ \sw_{i-1}-\sw_{i-2}-\mu \big(\grad \cJ(\sw_{i-1})-\grad \cJ(\sw_{i-2})\big) \bigg)
 }
 Thus from \eqref{primal_prox_ATC_DIG} we get
 \begin{subequations} \label{prox_ex_diffusion_network}
 \eq{
  \ssx_i&= \bar{\cA} \bigg( \ssx_{i-1}+ \sw_{i-1}-\sw_{i-2}-\mu \big(\grad \cJ(\sw_{i-1})-\grad \cJ(\sw_{i-2})\big) \bigg)  \\
\sw_i&={\rm \bf prox}_{\mu R}( \ssx_i)  
} 
\end{subequations}
The above recursion is equivalent to \eqref{prox_diff_agent_adapt}--\eqref{prox_diff_agent_proximal}. This can be easily seen by substituting \eqref{prox_diff_agent_adapt}--\eqref{prox_diff_agent_correct} into \eqref{prox_diff_agent_combine}. 
\begin{algorithm}[h] 
\caption*{\textrm{\bf{Algorithm}} (Prox-ED)}
{\bf Setting}: Let $\bar{A}=[\bar{a}_{sk}]=(I_{K}+A)/2$. Initialize $x_{k,-1}=\psi_{k,-1}$ and $w_{k,-1}$ arbitrary. For every agent $k$, repeat for $i=0,1,2,...$
\begin{subequations}
\label{prox_diff_agent}
\eq{
\psi_{k,i}&=w_{k,i-1}-\mu \grad J_k(w_{k,i-1}) \label{prox_diff_agent_adapt} \\
z_{k,i}&= x_{k,i-1}+\psi_{k,i}-\psi_{k,i-1} \label{prox_diff_agent_correct} \\
 x_{k,i}&= \sum_{s \in \cN_k} \bar{a}_{sk} z_{s,i}   \label{prox_diff_agent_combine} \quad \textbf{(Communication step)} \\
w_{k,i} &= {\bf prox}_{\mu R}(x_{k,i}) \label{prox_diff_agent_proximal}
}
\end{subequations}
\end{algorithm}
 \subsection{Prox-ATC I: $\bar{\cA}= \cA^2$, $\cB^2= (I- \cA)^2$, and $\cC=0$}
 For the choice $\bar{\cA}= \cA^2$, $\cB^2= (I- \cA)^2$, and $\cC=0$, we can represent \eqref{alg_prox_ATC_framework} as listed in \eqref{prox_ATCI}. This can be seen by following the same approach as the previous subsection. To see this, note that with $\cB^2= (I- \cA)^2$ to get
  \eq{
 \ssz_i&=(2\cA-\cA^2) \ssz_{i-1}+ \sw_{i-1}-\sw_{i-2}-\mu \big(\grad \cJ(\sw_{i-1})-\grad \cJ(\sw_{i-2})\big) 
 }
 By multiplying $\cA^2$ to both sides of the previous equation and introducing $\ssx_i \define \cA^2 \ssz_i$ we get
  \eq{
 \ssx_i&=\cA \bigg( (2I-\cA) \ssx_{i-1}+ \cA \sw_{i-1}-\cA \sw_{i-2}-\mu \cA \big(\grad \cJ(\sw_{i-1})-\grad \cJ(\sw_{i-2})\big) \bigg)
 }
 Thus from \eqref{primal_prox_ATC_DIG} we have $
\sw_i={\rm \bf prox}_{\mu R}( \ssx_i)$. 
\begin{algorithm}[h] 
\caption*{\textrm{\bf{Algorithm}}: Prox-ATC I}
{\bf Setting}:  Initialize $x_{k,-1}=\psi_{k,-1}=0$ and $w_{k,-1}$ arbitrary. For every agent $k$, repeat for $i=0,1,2,...$
\begin{subequations} \label{prox_ATCI}
\eq{
\psi_{k,i}&= w_{k,i-1}-\mu \grad J_k(w_{k,i-1})  \\
z_{k,i}&= 2x_{k,i-1}- \sum_{s \in \cN_k}a_{sk}(  x_{s,i-1}-\psi_{s,i}+\psi_{s,i-1}) \quad &\textbf{(Communication step)}  \\
 x_{k,i}&= \sum_{s \in \cN_k} a_{sk} z_{s,i} \quad &\textbf{(Communication step)}    \\
w_{k,i} &= {\bf prox}_{\mu R}(x_{k,i}) 
}
\end{subequations}
\end{algorithm}
\subsection{Prox-ATC II: $\bar{\cA}=\cA$, $\cB=I-\cA$, and $\cC=I-\cA$}
 For the choice $\bar{\cA}=\cA$, $\cB=I-\cA$, and $\cC=I-\cA$, we can represent \eqref{alg_prox_ATC_framework} as listed in \eqref{prox_ATCII}. This can be seen by following the same approach as the previous subsection. To see this, note that with $\cB^2= (I- \cA)^2$ to get
  \eq{
 \ssz_i &=(2\cA  -\cA^2) \ssz_{i-1}  
+ \cA \sw_{i-1} - \cA  \sw_{i-2} -\mu \big(\grad \cJ(\sw_{i-1})-\grad \cJ(\sw_{i-2})\big)
 }
 By multiplying $\cA$ to both sides of the previous equation and introducing $\ssx_i \define \cA \ssz_i$ we get
  \eq{
 \ssx_i&=\cA \bigg( (2I-\cA) \ssx_{i-1}+ \cA \sw_{i-1}-\cA \sw_{i-2}-\mu  \big(\grad \cJ(\sw_{i-1})-\grad \cJ(\sw_{i-2})\big) \bigg)
 }
 Thus from \eqref{primal_prox_ATC_DIG} we have $
\sw_i={\rm \bf prox}_{\mu R}( \ssx_i)$. 
\begin{algorithm}[h] 
\caption*{\textrm{\bf{Algorithm}}: Prox-ATC II}
{\bf Setting}:  Initialize $x_{k,-1}=\psi_{k,-1}=0$ and $w_{k,-1}$ arbitrary. For every agent $k$, repeat for $i=0,1,2,...$
\begin{subequations} \label{prox_ATCII}
\eq{
  \psi_{k,i}&=2x_{k,i-1} -\mu \big( \grad J_k(w_{k,i-1})-\grad J_k(w_{k,i-2})  \big)   \\
  z_{k,i}&=  \psi_{k,i} -\sum_{s \in \cN_k}a_{sk} ( x_{s,i-1}-w_{s,i-1}+w_{s,i-2} )  \quad &\textbf{(Communication step)} \\ 
x_{k,i}&= \sum_{s \in \cN_k} a_{sk} z_{s,i} \quad &\textbf{(Communication step)}    \\
w_{k,i} &= {\bf prox}_{\mu R}(x_{k,i}) 
}
\end{subequations}
\end{algorithm}
\section{Proximal mapping of \eqref{Rk}}
\label{app_counter_example_proximal}
 To rewrite the non-smooth terms \eqref{Rk} more compactly, we introduce 
\eq{
	D_1 & \define
	\ba{ccccccccccc}
	\sqrt{2} & 0 & 0 & 0 & 0 & 0 & 0& \cdots & 0 & 0 & 0 \\
	0 & 1 & -1 & 0 & 0 & 0 & 0& \cdots & 0 & 0 & 0 \\
	0 & 0 & 0 & 1 & -1 & 0 & 0& \cdots & 0 & 0 & 0 \\
	\vdots & \vdots & \vdots & \vdots & \vdots & \vdots & \vdots & \cdots & \vdots & \vdots & \vdots \\
	0 & 0 & 0 & 0 & 0 & 0 & 0& \cdots & 1 & -1 & 0 
	\ea	\in \real^{\frac{M}{2}\times M} \label{D1} \\
	D_2 & \define
	\ba{ccccccccccc}
	1 & -1 & 0 & 0 & 0 & 0 & 0& \cdots & 0 & 0 & 0 \\
	0 & 0 & 1 & -1 & 0 & 0 & 0& \cdots & 0 & 0 & 0 \\
	0 & 0 & 0 & 0 & 1 & -1 & 0& \cdots & 0 & 0 & 0 \\
	\vdots & \vdots & \vdots & \vdots & \vdots & \vdots & \vdots & \cdots & \vdots & \vdots & \vdots \\
	0 & 0 & 0 & 0 & 0 & 0 & 0& \cdots & 0 & 1 & -1 
	\ea	\in \real^{\frac{M}{2}\times M} \label{D2}
}
and $b_1 \define e_1$ where $e_1$ is the first column of the identity matrix $I_{M/2}$. With $D_1$, $D_2$ and $b_1$, we can rewrite $R_1(w)$ and $R_2(w)$ in \eqref{Rk} as
\eq{
	R_1(w) = \|D_1 w - b_1\|_1, \quad R_2(w) = \|D_2 w\|_1.
}
 Let us  introduce $g(w) = \|w\|_1$ so that $R_1(w) = g(D_1 w - b_1)$ and $R_2(w) = g(D_2 w)$. It can be verified that $D_1 D_1\tran = 2 I$ and $D_2 D_2\tran = 2 I$. Thus, from \cite[Theorem~6.15]{beck2017first} it holds that
 \begin{subequations}
\eq{
	{\bf prox}_{\mu R_1}(w) &= w + \frac{1}{2\mu}D_1\tran[{\bf prox}_{2\mu^2 g}(\mu D_1 w - \mu b_1) - \mu D_1 w + \mu b_1], \\
	{\bf prox}_{\mu R_2}(w) &= w + \frac{1}{2\mu}D_2\tran[{\bf prox}_{2\mu^2 g}(\mu D_2 w) - \mu D_2 w].
}
\end{subequations}
In other words, both ${\bf prox}_{\mu R_1}(w)$ and ${\bf prox}_{\mu R_2}(w)$ have closed forms which are easy to calculate since  
${\bf prox}_{\kappa g}(w) = \col \left\{ \sgn(w[j])\max\{\big|w[j]\big| - \kappa ,0\} \right\}_{j=1}^M \in \real^M$.
\section{NON-ATC Algorithms ($\bar{\cA}=I$)}
\label{supp_non_atc}
Consider the special case of \eqref{alg_ATC_framework} with $\bar{\cA}=I$:
\begin{subnumcases}{\label{alg_non_ATC_framework}}
\sw_i  =  (I-\cC) \sw_{i-1}-\mu \grad \cJ(\sw_{i-1})  -  \cB \sy_{i-1} \label{z_nonATC_DIG}  &\textbf{(primal-descent)} \\
\sy_i = \sy_{i-1}+ \cB  \sw_i \label{dual_nonATC_DIG}  &\textbf{(dual-ascent)} 
 \end{subnumcases}
 In this section, we will analyze \eqref{alg_non_ATC_framework} under the slightly different conditions. This is because the assumption imposed in \eqref{assump_combination} is not satisfied for the non-ATC case $\bar{\cA}=I$. We remark that we can also study the non-smooth recursion \eqref{alg_prox_ATC_framework} with $\bar{\cA}=I$ by adjusting the technique from \cite{alghunaim2019linearly}, which analyzed a specific {\em NON-ATC} instance of \eqref{alg_prox_ATC_framework} with $\bar{\cA}=I$. However, it would require stricter step-size conditions due to complication of the proximal term. Therefore, we will focus on the smooth case $R(w)=0$ to get wider step-size conditions. 
 
 We begin by showing that \eqref{alg_non_ATC_framework} covers DIGing \cite{nedic2017achieving}, EXTRA \cite{shi2015extra},  and DLM \cite{ling2015dlm}. Similar to the main paper, with $\sy_0=0$, we can eliminate the dual variable to get the equivalent representation:
\eq{\label{decentralized_implem_sup}
\sw_i &= (2I -    \cC-\cB^2) \sw_{i-1} - (I-\cC )  \sw_{i-2} \hspace{-0.5mm}-\hspace{-0.5mm} \mu \big(\grad \cJ(\sw_{i-1}) \hspace{-0.5mm}-\hspace{-0.5mm} \grad \cJ(\sw_{i-2})\big)
}
The above algorithm can cover DIGing \cite{nedic2017achieving}, EXTRA \cite{shi2015extra},  and DLM \cite{ling2015dlm} as special cases:
 \begin{itemize}
 \item (DIGing \cite{nedic2017achieving}): If $\cB^2=(I-\cA)^2$ and $\cC=I-\cA^2$, then we recover the DIGing form given in \cite[Section 2.2.1]{nedic2017achieving}:
\eq{\label{diging_alg}
\sw_i &= 2\cA \sw_{i-1} -\cA^2 \sw_{i-2} \hspace{-0.5mm}-\hspace{-0.5mm} \mu \big(\grad \cJ(\sw_{i-1}) \hspace{-0.5mm}-\hspace{-0.5mm} \grad \cJ(\sw_{i-2})\big)
}
\item (EXTRA \cite{shi2015extra}): If $\cB^2=0.5(I-\cA)$ and $\cC=0.5(I-\cA)$, then we recover EXTRA:
\eq{\label{extra_alg}
\sw_i = 0.5(I+\cA) (2\sw_{i-1} -  \sw_{i-2}) \hspace{-0.5mm}-\hspace{-0.5mm} \mu \big(\grad \cJ(\sw_{i-1}) \hspace{-0.5mm}-\hspace{-0.5mm} \grad \cJ(\sw_{i-2}) \big)
}
\item (DLM from \cite{ling2015dlm}) Consider an instance\footnote{We let $\tilde{d}_k=2cd_k+\rho={1 \over \mu}$ in the DLM from \cite{ling2015dlm}. } of the decentralized linearized ADMM (DLM) method from \cite{ling2015dlm}:
 \begin{subequations} \label{lin_admm}
\eq{
\sw_i&=\sw_{i-1}-\mu \big( \grad_{\ssw} \cJ(\sw_{i-1})+c \cL \sw_{i-1}+ \sy_{i-1} \big) \label{admm1} \\
\sy_i &= \sy_{i-1}+c \cL \sw_i \label{admm2}
}
 \end{subequations}
 where $\mu,c>0$. The matrix $\cL$ is the matrix chosen such that the $k$-th block of $\cL \sw_i$ is equal to $\sum_{s \in \cN_k} w_{k,i}-w_{s,i}$.  Eliminating the dual variable from \eqref{lin_admm}, we get:
\eq{
\sw_i = (I-\mu c \cL) (2\sw_{i-1} -  \sw_{i-2}) \hspace{-0.5mm}-\hspace{-0.5mm} \mu \big(\grad \cJ(\sw_{i-1}) \hspace{-0.5mm}-\hspace{-0.5mm} \grad \cJ(\sw_{i-2})\big) \label{DLM}
} 
which is equivalent to \eqref{decentralized_implem_sup} with $\cB^2=\mu c \cL$ and $\cC=\mu c \cL$. Notice that DLM \eqref{DLM} and EXTRA \eqref{extra_alg} have the same form and differ only by the choice of matrices multiplying the term $(2\sw_{i-1} -  \sw_{i-2})$.
\end{itemize} 
For the analysis of the NON-ATC form, we impose the following assumption.
\begin{assumption} \label{assum_supp_com}
{\rm We assume that
\eq{ 
	 \cC \sw =0 \iff \cB \sw=0   \iff w_1=\cdots=w_K , 
	 } 
	and the following condition hold:
\eq{
0 \leq \cB^2 \leq \cC < I \label{assum_supp_combination}
}
}
\qd
\end{assumption} 
We note that the above assumption is consistent with the assumption used to analyze EXTRA \cite{shi2015extra}. Specifically, the EXTRA case ($\cB^2=0.5(I-\cA)$ and $\cC=0.5(I-\cA)$) satisfies \eqref{assum_supp_combination} for any primitive, symmetric and doubly stochastic $\cA$. It is also satisfied by DLM ($\cB^2=\mu c \cL$ and $\cC=\mu c \cL$) since we can always choose small enough $c$ or $\mu$. For the DIGing case, condition \eqref{assum_supp_combination} implies that $0<\cA\leq 1$. Although not necessary, it can be easily satisfied and allow us to derive tighter steps-size upper bounds -- see Remark \eqref{remark:conv_conditions}.

 Similar to the main body in the paper, we let $(\sw^\star,\sy_b^\star)$ to be the particular saddle-point where $\sy_b^\star$ is the unique vector in the range space of $\cB$. Then we know from Lemma \ref{lemma:existence_fixed_optimality} that this point coincide with the fixed point of \eqref{alg_non_ATC_framework} and satisfies the optimality conditions:
\begin{subnumcases}{}
	\mu \grad \cJ(\sw^\star) + \cB \sy_{b}^\star=0 \label{fixed_primal} \\
  \cB \sw^\star=0 \label{fixed_dual} 
\end{subnumcases}
Note that $\cC \sw^\star=0$. Thus, using the above fixed point we can get the following error recursion dynamics:
\begin{subnumcases}{}
\tsw_i=\tsw_{i-1}-\mu \big(\grad \cJ(\sw_{i-1})-\grad \cJ(\sw^\star) \big)- \cC \tsw_{i-1} -\cB \tsy_{i-1} \label{error_primal_supp} \\
\tsy_i = \tsy_{i-1}+\cB \tsw_i \label{error_dual_supp} 
\end{subnumcases}
where $\tsw_i=\sw_i-\sw^\star$ and $\tsy_i=\sy_i-\sy_b^\star$. To simplify the notation in the analysis of the next results, we define $\sigma_{\max}\define \sigma_{\max}(\cC)<1$ and $0< \underline{\sigma} \define \underline{\sigma}(\cB^2)<1$.
\begin{lemma}[\sc Descent inequality]
{\rm  
		Under Assumptions \ref{assump:cost} and \ref{assum_supp_com} and the step-size condition 
		\eq{
		\mu < {2 \big(1-\sigma_{\max}(\cC)\big) \over \delta}
		}
		 it holds that 
		\eq{
				\|\tsw_{i-1}-\mu \big(\grad \cJ(\sw_{i-1})-\grad \cJ(\sw^\star)\big)- \cC \tsw_{i-1}\|^2
	\leq \ \left(1-  \mu \nu \big(2-\tfrac{\mu  \delta}{ 1-\sigma_{\max}(\cC)}\big)\right) \|\tsw_{i-1}\|^2   -   \| \tsw_{i-1}\|_{\cC}^2  
			\label{bound_descent_0_supp}
		}
	} 
\end{lemma}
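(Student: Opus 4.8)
The plan is to reduce the left-hand side to a standard gradient-descent contraction estimate by a change of variables that decouples the role of $\cC$ from the gradient. Write $\tsw\define \tsw_{i-1}$ and $g\define \grad\cJ(\sw_{i-1})-\grad\cJ(\sw^\star)$ for brevity, so the quantity to bound is $\|(I-\cC)\tsw-\mu g\|^2$. Since Assumption~\ref{assum_supp_com} gives $0\leq \cB^2\leq \cC<I$, the matrix $I-\cC$ is symmetric positive definite, so $(I-\cC)^{1/2}$ and $(I-\cC)^{-1/2}$ are well defined. The key algebraic observation is the factorization
\[
(I-\cC)\tsw-\mu g = (I-\cC)^{1/2}\big[(I-\cC)^{1/2}\tsw-\mu(I-\cC)^{-1/2}g\big].
\]
Denoting the bracketed vector by $v$ and using $I-\cC\leq I$, I would first bound $\|(I-\cC)\tsw-\mu g\|^2=v\tran(I-\cC)v\leq \|v\|^2$.

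Next I would expand $\|v\|^2$. The crucial point is that the cross term collapses cleanly, because $(I-\cC)^{1/2}$ and $(I-\cC)^{-1/2}$ cancel and leave $\langle(I-\cC)^{1/2}\tsw,(I-\cC)^{-1/2}g\rangle=\tsw\tran g$, with no residual $\cC$-coupling. This gives
\[
\|v\|^2=\|\tsw\|_{I-\cC}^2-2\mu\,\tsw\tran g+\mu^2\|g\|_{(I-\cC)^{-1}}^2=\|\tsw\|^2-\|\tsw\|_\cC^2-2\mu\,\tsw\tran g+\mu^2\|g\|_{(I-\cC)^{-1}}^2,
\]
which already exposes the desired $-\|\tsw\|_\cC^2$ term. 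I would then use $(I-\cC)^{-1}\leq \tfrac{1}{1-\sigma_{\max}}I$, valid because the eigenvalues of $I-\cC$ lie in $[1-\sigma_{\max},1]$, to get $\|g\|_{(I-\cC)^{-1}}^2\leq \tfrac{1}{1-\sigma_{\max}}\|g\|^2$; this is exactly where the factor $\tfrac{1}{1-\sigma_{\max}}$ in the statement originates.

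Finally I would invoke the two standard consequences of Assumption~\ref{assump:cost}: co-coercivity of the gradient, $\|g\|^2\leq \delta\,\tsw\tran g$ (from $\delta$-smoothness of the convex $\cJ$), to replace $\tfrac{\mu^2}{1-\sigma_{\max}}\|g\|^2$ by $\tfrac{\mu^2\delta}{1-\sigma_{\max}}\tsw\tran g$, and then strong convexity, $\tsw\tran g\geq \nu\|\tsw\|^2$. Collecting terms yields
\[
\|(I-\cC)\tsw-\mu g\|^2\leq \|\tsw\|^2-\|\tsw\|_\cC^2-\mu\Big(2-\frac{\mu\delta}{1-\sigma_{\max}}\Big)\tsw\tran g,
\]
and the step-size condition $\mu<2(1-\sigma_{\max})/\delta$ is precisely what makes the coefficient $2-\mu\delta/(1-\sigma_{\max})$ strictly positive, so that applying $\tsw\tran g\geq \nu\|\tsw\|^2$ produces the claimed inequality.

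The main obstacle I anticipate is the cross term $g\tran(I-\cC)\tsw$: expanding $\|(I-\cC)\tsw-\mu g\|^2$ directly leaves this $\cC$-coupled inner product, which does not combine cleanly with the strong-convexity and co-coercivity estimates (both of which are naturally stated in terms of $\tsw\tran g$, not $\tsw\tran\cC g$). The factorization above is the device that removes the coupling, at the mild cost of reweighting $\|g\|^2$ by $(I-\cC)^{-1}$; carefully tracking that reweighting is what yields the sharper $1/(1-\sigma_{\max})$ dependence here, in contrast to the additive $\sigma_{\max}$ dependence obtained in Theorem~\ref{theorem_lin_convergence} where $\cC$ was instead folded into the smoothness constant of the augmented objective.
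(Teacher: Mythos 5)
Your proof is correct, and it takes a genuinely different algebraic route from the paper's own proof of this lemma. Writing $g \define \grad \cJ(\sw_{i-1})-\grad \cJ(\sw^\star)$, the paper expands the square directly, which leaves the coupled term $\|\mu g + \cC\tsw_{i-1}\|^2$, and then splits that term by Jensen's inequality with the parameter choice $t=\sigma_{\max}(\cC)$ combined with $\cC^2\le \sigma_{\max}(\cC)\,\cC$, giving $\|\mu g+\cC\tsw_{i-1}\|^2\le \tfrac{\mu^2}{1-\sigma_{\max}(\cC)}\|g\|^2+\|\tsw_{i-1}\|_{\cC}^2$. Your congruence factorization through $(I-\cC)^{1/2}$ avoids that splitting entirely: the cross term collapses to $-2\mu\,\tsw_{i-1}\tran g$ by construction, and the $\cC$-dependence lands on $\|g\|^2_{(I-\cC)^{-1}}\le\tfrac{1}{1-\sigma_{\max}(\cC)}\|g\|^2$. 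Notably, the two routes arrive at exactly the same intermediate bound, $\|\tsw_{i-1}\|^2-\|\tsw_{i-1}\|^2_{\cC}-2\mu\,\tsw_{i-1}\tran g+\tfrac{\mu^2}{1-\sigma_{\max}(\cC)}\|g\|^2$, and then finish identically (co-coercivity, then strong convexity, with the step-size condition making the coefficient $2-\mu\delta/(1-\sigma_{\max}(\cC))$ positive), so neither argument yields a sharper constant. What your route buys is transparency and the absence of parameter tuning: the $1/(1-\sigma_{\max}(\cC))$ factor is structural, and the argument degrades gracefully when $\cC=0$, whereas Jensen with $t=\sigma_{\max}(\cC)$ implicitly needs $\sigma_{\max}(\cC)>0$. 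The price is that your device hinges on $I-\cC$ being positive definite, i.e., on the condition $\cC<I$ of Assumption \ref{assum_supp_com}; the paper's expansion-plus-Jensen template never inverts $I-\cC$, so it is the kind of argument one could still attempt under the weaker condition $\cC<2I$ of Assumption \ref{assump_combination} used for Theorem \ref{theorem_lin_convergence}, where your factorization would break down.
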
 
\begin{proof} It holds that:
\eq{
	&\|\tsw_{i-1}-\mu \big(\grad \cJ(\sw_{i-1})-\grad \cJ(\sw^\star)\big)- \cC \tsw_{i-1}\|^2  \nonumber \\
	&  = \|\tsw_{i-1}\|^2  \hspace{-0.5mm}-\hspace{-0.5mm} 2 \mu \tsw_{i-1}\tran \big(\grad \cJ(\sw_{i-1}) \hspace{-0.5mm}-\hspace{-0.5mm} \grad \cJ(\sw^\star) \big) \hspace{-0.5mm} - \hspace{-0.5mm} 2\| \tsw_{i-1}\|_{\cC}^2 \hspace{-0.5mm} \nonumber \\
	& \quad + \| \mu \grad \cJ(\sw_{i-1})-\mu \grad \cJ(\sw^\star)+ \cC \tsw_{i-1} \|^2   
	\label{squared_primal_descent_supp}}
From Jensen's inequality, it holds for any $t \in (0,1)$ that
\eq{
	 \| \mu \grad \cJ(\sw_{i-1})-\mu \grad \cJ(\sw^\star)+ \cC \tsw_{i-1} \|^2  
	& \leq {\mu^2 \over 1-t}\| \grad \cJ(\sw_{i-1})-\grad \cJ(\sw^\star)\|^2 +{1 \over t }\|   \tsw_{i-1} \|_{\cC^2}^2 \nonumber \\
	&\leq \tfrac{\mu^2}{ 1-\sigma_{\max}}\| \grad \cJ(\sw_{i-1})-\grad \cJ(\sw^\star)\|^2 + \|    \tsw_{i-1} \|_{\cC}^2 \label{bound_jensen_supp}
}
where in the last step we set $t=\sigma_{\max}<1$ and used the upper bound $\|  \tsw_{i-1} \|^2_{\cC^2} \leq \sigma_{\max}\|   \tsw_{i-1} \|_{\cC}^2$. Also, since $\cJ(\sw)$ is  $\delta$-smooth, it holds that \cite[Theorem 2.1.5]{nesterov2013introductory}:
\eq{
\| \grad \cJ(\sw_{i-1})-\grad \cJ(\sw^\star)  \|^2 \leq  \delta \tsw_{i-1}\tran  \big(\grad \cJ(\sw_{i-1})-\grad \cJ(\sw^\star)  \big)
}
Substituting the above two bound into \eqref{squared_primal_descent_supp} we get
\eq{
&	\|\tsw_{i-1}-\mu \big(\grad \cJ(\sw_{i-1})-\grad \cJ(\sw^\star)\big)- \cC \tsw_{i-1}\|^2
	 \nonumber \\
	 &\leq \  \|\tsw_{i-1}\|^2 -  \mu \left(2- \tfrac{\mu \delta}{ 1-\sigma_{\max}}\right) \tsw_{i-1}\tran \big(\grad \cJ(\sw_{i-1})   -\grad \cJ(\sw^\star)  \big)-   \| \tsw_{i-1}\|_{\cC}^2  \nonumber \\
	 &\leq \ \left(1-  \mu \nu \left(2- \tfrac{\mu \delta}{ 1-\sigma_{\max}}\right)\right) \|\tsw_{i-1}\|^2   -   \| \tsw_{i-1}\|_{\cC}^2 
}
where the last step holds for $\mu < {2(1-\sigma_{\max}) \over \delta}$ due to the strong-convexity condition \eqref{stron-convexity}. 
\end{proof}
\begin{theorem}[Linear convergence] \label{theorem_extra_supp} {\rm Under Assumptions \ref{assump:cost} and \ref{assum_supp_com} and if the step-size satisfies
	\eq{\label{step-size-cond-2_supp}
		\mu < \frac{2\big(1-\sigma_{\max}(\cC)\big)}{\delta}
	}
	It holds that $\|\tsw_i\|^2_{\cQ} \le C\rho^i$ where $\cQ>0$, $
		\gamma 	\define \max\left\{ 1-  \mu \nu \big(2- \tfrac{ \mu \delta}{ 1-\sigma_{\max}(\cC)}\big), 1 -  \underline{\sigma}(\cB^2) \right\} < 1$ and $C\geq 0$.	
		}
\end{theorem}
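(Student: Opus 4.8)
The plan is to follow the template of the ATC proof of Theorem~\ref{theorem_lin_convergence}, but now with the Lyapunov function $\|\tsw_i\|^2_{\cQ}+\|\tsy_i\|^2$ where $\cQ\define I-\cB^2$. Since Assumption~\ref{assum_supp_com} gives $\cB^2\le\cC<I$, we immediately get $\cQ=I-\cB^2>0$, supplying the positive-definite weight promised in the statement. The goal is to show this function contracts by the factor $\gamma$ at each step; iterating then yields $\|\tsw_i\|_{\cQ}^2\le\|\tsw_i\|_{\cQ}^2+\|\tsy_i\|^2\le\gamma^i C$ with $C\define\|\tsw_0\|_{\cQ}^2+\|\tsy_0\|^2\ge0$, which is the claim.

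First I would square both error recursions \eqref{error_primal_supp}--\eqref{error_dual_supp}. Writing $u_{i-1}\define \tsw_{i-1}-\mu(\grad\cJ(\sw_{i-1})-\grad\cJ(\sw^\star))-\cC\tsw_{i-1}$ for the exact quantity appearing in the descent lemma, the primal recursion reads $\tsw_i=u_{i-1}-\cB\tsy_{i-1}$ and the dual recursion reads $\tsy_i=\tsy_{i-1}+\cB\tsw_i$. Expanding $\|\tsw_i\|^2$ and $\|\tsy_i\|^2$, substituting $\tsy_{i-1}\tran\cB\tsw_i=\tsy_{i-1}\tran\cB u_{i-1}-\|\cB\tsy_{i-1}\|^2$, and adding, the cross terms $\pm 2\tsy_{i-1}\tran\cB u_{i-1}$ cancel exactly and one is left with the clean identity
\[
\|\tsw_i\|^2_{\cQ}+\|\tsy_i\|^2=\|u_{i-1}\|^2+\|\tsy_{i-1}\|^2-\|\cB\tsy_{i-1}\|^2 .
\]
This is the NON-ATC analogue of \eqref{err_sum_ed}; the difference is that the weight $\cQ=I-\cB^2$ now acts directly on $\tsw_i$ rather than on the auxiliary variable $\tsz_i$ used in the ATC case.

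Next I would bound the right-hand side. The descent inequality \eqref{bound_descent_0_supp} bounds $\|u_{i-1}\|^2$ by $a\|\tsw_{i-1}\|^2-\|\tsw_{i-1}\|^2_{\cC}$ with $a\define 1-\mu\nu(2-\tfrac{\mu\delta}{1-\sigma_{\max}(\cC)})$, and since $\sy_0=0$ forces $\tsy_{i-1}\in\mbox{range}(\cB)$ we have $\|\cB\tsy_{i-1}\|^2\ge\underline{\sigma}(\cB^2)\|\tsy_{i-1}\|^2$, exactly as in Theorem~\ref{theorem_lin_convergence}. After substitution, establishing the contraction reduces to two bounds: $(1-\underline{\sigma}(\cB^2))\|\tsy_{i-1}\|^2\le\gamma\|\tsy_{i-1}\|^2$ for the dual part, which is immediate from $\gamma\ge 1-\underline{\sigma}(\cB^2)$, and $a\|\tsw_{i-1}\|^2-\|\tsw_{i-1}\|^2_{\cC}\le\gamma\|\tsw_{i-1}\|^2_{\cQ}$ for the primal part.

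The main obstacle is the primal matrix inequality, which rearranges to $(a-\gamma)I+\gamma\cB^2\le\cC$, and this is precisely where Assumption~\ref{assum_supp_com} does the work: the extra term $-\|\tsw_{i-1}\|^2_{\cC}$ produced by the descent lemma supplies exactly the slack needed to absorb $\gamma\cB^2$. Since $\gamma=\max\{a,\,1-\underline{\sigma}(\cB^2)\}\ge a$, we have $(a-\gamma)I\le 0$; the step-size condition $\mu<2(1-\sigma_{\max}(\cC))/\delta$ makes $a<1$ while $\underline{\sigma}(\cB^2)>0$ makes $1-\underline{\sigma}(\cB^2)<1$, so $0<\gamma<1$; hence $\gamma\cB^2\le\cB^2\le\cC$ using $\cB^2\ge 0$ and the assumed ordering $\cB^2\le\cC$. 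Summing, $(a-\gamma)I+\gamma\cB^2\le\cC$, as needed. This gives $\|\tsw_i\|^2_{\cQ}+\|\tsy_i\|^2\le\gamma(\|\tsw_{i-1}\|^2_{\cQ}+\|\tsy_{i-1}\|^2)$, and iterating together with $\cQ>0$ completes the proof.
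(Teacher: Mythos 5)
Your proof is correct and takes essentially the same route as the paper's: the same Lyapunov function $\|\tsw_i\|_{\cQ}^2+\|\tsy_i\|^2$ with $\cQ=I-\cB^2$, the same squared-and-summed identity, the same descent lemma and range-space bound on $\|\cB\tsy_{i-1}\|^2$, and the same use of $\cB^2\le\cC$ from Assumption \ref{assum_supp_com} to absorb the network penalty. The only cosmetic difference is that you verify the target matrix inequality $(a-\gamma)I+\gamma\cB^2\le\cC$ directly, whereas the paper adds and subtracts $\gamma_1\|\tsw_{i-1}\|_{\cB^2}^2$ and bounds $\gamma_1\|\tsw_{i-1}\|_{\cB^2}^2-\|\tsw_{i-1}\|_{\cC}^2\le-(1-\gamma_1)\|\tsw_{i-1}\|_{\cC}^2\le 0$; these are the same estimate.
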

\begin{proof}
Squaring both sides of \eqref{error_primal_supp} and \eqref{error_dual_supp} we get
\eq{
	\|\tsw_i\|^2&= \|\tsw_{i-1}-\mu \big(\grad \cJ(\sw_{i-1})-\grad \cJ(\sw^\star)\big)- \cC \tsw_{i-1}\|^2 +  \| \cB \tsy_{i-1}\|^2 \nonumber \\
	& \ -2  \tsy_{i-1}\tran \cB \left(\tsw_{i-1}-\mu \big(\grad \cJ(\sw_{i-1})-\grad \cJ(\sw^\star)\big)- \cC \tsw_{i-1}\right) 
	\label{er_sq_primal_supp}
}
and
\eq{
	\|\tsy_i\|^2  =\|\tsy_{i-1}+ \cB \tsw_i \|^2 &= \|\tsy_{i-1}\|^2+ \| \cB \tsw_i \|^2 + 2  \tsy_{i-1} \tran \cB \tsw_i \nonumber \\
	&\overset{\eqref{error_primal_supp}}{=} \|\tsy_{i-1}\|^2+ \| \tsw_i \|^2_{\cB^2} - 2    \|\cB \tsy_{i-1}\|^2 \nonumber \\ 
	& \quad +2   \tsy_{i-1}\tran \cB  \left(\tsw_{i-1}-\mu \big(\grad \cJ(\sw_{i-1})-\grad \cJ(\sw^\star)\big)- \cC \tsw_{i-1}\right) \label{er_sq_dual_supp}
}
Adding \eqref{er_sq_dual_supp} to \eqref{er_sq_primal_supp}, we get 
\eq{
\|\tsw_i\|^2_{\cQ} \hspace{-0.6mm}+\hspace{-0.6mm} \|\tsy_i\|^2 \hspace{-0.6mm}=\hspace{-0.6mm} \|\tsw_{i-1}-\mu \big(\grad \cJ(\sw_{i-1})-\grad \cJ(\sw^\star)\big)- \cC \tsw_{i-1}\|^2 \hspace{-0.6mm}+\hspace{-0.6mm} \|\tsy_{i-1}\|^2 \hspace{-0.6mm}-\hspace{-0.6mm}      \|\cB \tsy_{i-1}\|^2 \label{err_sum_supp}
}
where $\cQ = I - \cB^2>0$ since $\cB^2<I$. Since  both $\sy_i$ and $\sy_b^\star$ lie in the range space of $\cC$, it  holds that $
\|\cB \tsy_{i-1}\|^2 \geq 
\underline{\sigma} \|\tsy_{i-1}\|^2 $. Thus, we can bound \eqref{err_sum_supp} by
\eq{
	\|\tsw_i\|^2_{\cQ}+ \|\tsy_i\|^2 
	& \le  \|\tsw_{i-1}-\mu \big(\grad \cJ(\sw_{i-1})-\grad \cJ(\sw^\star)\big)- \cC \tsw_{i-1}\|^2 \hspace{-0.5mm}+\hspace{-0.5mm} (1- \underline{\sigma})\|\tsy_{i-1}\|^2 \label{err_sum1_supp}
}
When $\mu \le \frac{2(1 - \sigma_{\max})}{\delta }$, we can substitute  inequality \eqref{bound_descent_0_supp} into the above relation and get
\eq{
	\|\tsw_i\|^2_{\cQ} \hspace{-0.5mm}+\hspace{-0.5mm} \|\tsy_i\|^2 
	& \le \left(1-  \mu \nu \big(2-\tfrac{\mu  \delta}{(1-\sigma_{\max})}\big)\right)\hspace{-0.5mm}\|\tsw_{i-1}\|^2 \hspace{-0.5mm}-\hspace{-0.5mm}   \| \tsw_{i-1}\|_{\cC}^2 \hspace{-0.5mm}+\hspace{-0.5mm} (1- \underline{\sigma})\|\tsy_{i-1}\|^2 \label{err_sum1-2_supp}
}
Let $\gamma_1 \define 1-  \mu \nu \big(2- { \mu \delta \over 1-\sigma_{\max}}\big) $, $\gamma_2 \define 1- \underline{\sigma}$, and add and subtract $\gamma_1 \| \sw_{i-1}\|_{\cB^2}^2$  then the above inequality can be rewritten as
\eq{
	\|\tsw_i\|^2_{\cQ} \hspace{-0.5mm}+\hspace{-0.5mm} \|\tsy_i\|^2 
	& \le \gamma_1 \hspace{-0.5mm}\|\tsw_{i-1}\|^2_{\cQ} \hspace{-0.5mm} + \gamma_1  \| \sw_{i-1}\|_{\cB^2}^2 -\| \sw_{i-1}\|_{\cC}^2 +\hspace{-0.5mm} \gamma_2 \|\tsy_{i-1}\|^2 \label{err_sum1-3_supp}
}
Note that $\gamma_1<1$ if $\mu < {2 (1-\sigma_{\max}) \over \delta}$. Therefore, using condition \eqref{assum_supp_combination}, we have:
\eq{
\gamma_1  \| \sw_{i-1}\|_{\cB^2}^2 -\| \sw_{i-1}\|_{\cC}^2 \leq -(1-\gamma_1)  \| \sw_{i-1}\|_{\cC}^2  \leq 0
}
Using the previous bound and  letting $\gamma = \max\{\gamma_1, \gamma_2\}$, inequality \eqref{err_sum1-3_supp} is upper bounded by 
	\begin{align}
		\|\tsw_i\|^2_{\cQ} + \|\tsy_i\|^2 
		&\leq \gamma \left( \|\tsw_{i-1}\|^2_{\cQ}  + \|\tsy_{i-1}\|^2 \right).
	\end{align}
	Since $\cQ = I -  \cB^2$ is positive definite, we reach the linear convergence of $\tsw_i$. 
	\end{proof}
\end{document}